\newcommand{\Ex}{\,{\mathbb{E}}}
\newcommand{\Prob}{{\mathbb{P}}}
\newcommand{\R}{\mathbb{R}}
\newcommand{\Z}{\mathbb{Z}}
\newcommand{\N}{{\mathbb N}}
\renewcommand{\O}{\mathrm{O}}
\newcommand\eins{\mathbf 1} %bold, weil einfacher
\newcommand{\E}[1]{{\mathbb E}\left[#1\right]}				
\newcommand{\p}{{\mathbb P}}
\providecommand{\P}{}
\renewcommand{\P}[1]{{\mathbb P}\left(#1\right)}
\newcommand{\eqdist}{\ensuremath{\stackrel{\mathrm{d}}{=}}}
\newcommand{\convas}{\ensuremath{\stackrel{\mathrm{a.s.}}{\rightarrow}}}
\newcommand{\convl}[1]{\overset{L_{#1}}\longrightarrow}
\DeclareMathOperator{\Var}{Var}
\DeclareMathOperator{\Ber}{Ber}
\DeclarePairedDelimiterXPP\Pk[1]{\p}{(}{)}{}{
  \providecommand\given{\nonscript\:\delimsize\vert\nonscript\:\mathopen{}}  
#1}
\DeclarePairedDelimiterXPP\Pkv[2]{\p_{#1}}{(}{)}{}{
  \providecommand\given{\nonscript\:\delimsize\vert\nonscript\:\mathopen{}}  
#2}
\DeclarePairedDelimiterXPP\Ek[1]{\Ex}[]{}{
  \providecommand\given{\nonscript\:\delimsize\vert\nonscript\:\mathopen{}}  
#1}
\DeclarePairedDelimiterXPP\Ekv[2]{\Ex_{#1}}[]{}{
  \providecommand\given{\nonscript\:\delimsize\vert\nonscript\:\mathopen{}}  
#2}
\DeclarePairedDelimiter\mg{\{}{\}}
\DeclarePairedDelimiter\kl{(}{)}
\DeclarePairedDelimiter\ceil{\lceil}{\rceil}
\DeclarePairedDelimiter{\abs}{\lvert}{\rvert}
\DeclarePairedDelimiterXPP\lnorm[2]{}\lVert\rVert{_{#1}}{#2}
\DeclarePairedDelimiterXPP\lnorml[2]{}\lVert\rVert{_{#1}^{#1}}{#2}
\newcommand\inv{^{-1}}
\newcommand\asec{\alpha^{\mathrm{sec}}}
\newcommand\hopt{h_{\mathrm{opt}}}
\newcommand\lopt{l_{\mathrm{opt}}}
\newcommand\topt{t_{\mathrm{opt}}}
\newcommand\alphan{\rho_n}
\newcommand\indgt{\eins_{G_{t,k}}}
\newcommand\gtk{G_{t,k}}
\newtheorem{thm}{Theorem}
\newtheorem{lem}[thm]{Lemma}
\newdefinition{defi}[thm]{Definition}
\newtheorem{prop}[thm]{Proposition}
\newtheorem{rem}[thm]{Remark}
\newproof{proof}{Proof}
\newproof{pot1}{Proof of Theorem~\ref{thm:moment}}
\newproof{pot2}{Proof of Theorem~\ref{prop:gamma-k}}
\begin{document}
%%
%% The title of the paper goes here.  Edit to your title.
%%

\title{Probabilistic analysis of optimal multi-pivot QuickSort}

%%
%% Now edit the following to give your name and address:
%%
\author[1]{Cecilia Holmgren\footnote{The research of Cecilia Holmgren was supported by Ragnar Söderberg’s Foundation; the Swedish Research Council; and the Knut and Alice Wallenberg Foundation.}}
\ead{cecilia.holmgren@math.uu.se}
%\affilation[1]{organization={Department of Mathematics, Uppsala University},
%city={Uppsala},
%country={Sweden}}

\author[2]{Jasper Ischebeck}
%\address{Institute of Mathematics, Goethe University Frankfurt, Frankfurt a.M., Germany}
\ead{ischebec@math.uni-frankfurt.de}

\author[3]{Daniel Krenn}
%\address{Fachbereich Mathematik, Paris Lodron University of Salzburg, Austria}
\ead{daniel.krenn@plus.ac.at}

\author[2]{Florian Lesny}
%\address{Institute of Mathematics, Goethe University Frankfurt, Frankfurt a.M., Germany}
\ead{lesny@math.uni-frankfurt.de}

\author[2]{Ralph Neininger}
%\address{Institute of Mathematics, Goethe University Frankfurt, Frankfurt a.M., Germany}
\ead{neininger@math.uni-frankfurt.de}
\affiliation[1]{organization={Department of Mathematics, Uppsala University},
            city={Uppsala},
            country={Sweden}}
\affiliation[2]{organization={Institute of Mathematics, Goethe University Frankfurt},
addressline={Robert-Mayer-Straße 10},
postcode={60325},
city={Frankfurt},
country={Germany}}
\affiliation[3]{organization={Fachbereich Mathematik, Paris Lodron Unitversity of Salzburg},
addressline={Hellbrunner Straße 34},
postcode={5020},
city={Salzburg},
country={Austria}}

\begin{abstract}
We consider a multi-pivot QuickSort algorithm using $K\in\N$ pivot elements to partition a nonsorted list into $K+1$ sublists in order to proceed recursively on these sublists.
  For the partitioning stage, various strategies are in use. We focus on the strategy that minimizes the expected number of key comparisons in the standard random model, where the list is given as a uniformly permuted list of distinct elements.

We derive asymptotic expansions for the expectation and variance of the number of key comparisons as well as a limit law for all $K\in\N$, where the convergence holds for all (exponential) moments. For $K\le 4$ we also bound the rate of convergence within the Wasserstein and Kolmogorov--Smirnov distance. 

Our analysis of the expectation is based on classical results for random $m$-ary search trees. For the remaining results, combinatorial considerations are used to make the contraction method applicable.

\end{abstract}
%\subjclass[2010]{}
% \date{May 18, 2024}

\maketitle

\noindent
{\em Keywords:} Quicksort, multi-pivot, searching and sorting, probabilistic analysis of algorithms, contraction method. \\

%\noindent
%{\em MSC:} Primary 68P05, 60C05, Secondary: 68R15, 68P10

\section{Introduction}

The QuickSort algorithm, introduced by Hoare \cite{hoare:quicksort}, is a divide-and-conquer sorting algorithm. It solves the problem of sorting a list of distinct elements/items from a totally ordered set by selecting an element as \emph{pivot} and partitioning the other elements into two sublists according to whether they are smaller or larger than the pivot and acting recursively on these sublists. QuickSort is commonly used, and variants of it are the standard sorting routines in many programming frameworks such as Oracle's modern Java versions.

Java 7 switched from using the classical QuickSort to a dual-pivot QuickSort introduced by Yaroslavskiy \cite{Yaroslavskiy2009}, which is still used in Java SE as of Java 23 \cite{java23doc}. In general, multi-pivot QuickSort is a variant of QuickSort that uses $K\in \N$ pivots (instead of $K=1$ in the classical QuickSort) to partition into $K+1$ sublists.
 The advantages of multi-pivot QuickSort algorithms include that they reduce the list sizes faster and need fewer passes over the list. In addition, if designed properly, they need fewer key comparisons. The Yaroslavskiy algorithm is analyzed in \cite{Wild2012, wild-nebel-neininger} under the probabilistic model of uniformity of the order of the given items. In the present paper, we focus on the number of key comparisons as a cost measure to analyze the complexity of multi-pivot QuickSort.

\subsection{Multi-pivot QuickSort}

For $K \in \N$, a $K$-pivot QuickSort algorithm sorts a list of more than $K$ elements by
\begin{enumerate}
\item picking $K$ elements as \emph{pivot elements} (according to some
  rule, for example picking the last $K$ elements) and sorting these elements (by some elementary sorting
  algorithm) as $p_1< \dots< p_K$, then
\item partitioning the list into $K+1$ sublists $S_0$, \ldots, $S_K$, each containing only
  the elements below the smallest pivot element $p_1$,
  between two consecutive pivot elements or above the largest pivot element $p_K$,
  and finally
\item proceeding recursively with the $K+1$ new lists.
\end{enumerate}
If the list has $K$ or fewer elements, it uses directly some elementary sorting algorithm. Note that the unspecified components above (how to choose and sort the pivot elements and how the lists of at most $K$ elements are sorted) do not enter our analysis in this paper; our results are valid for any reasonable choice. 

The core of the QuickSort algorithm is formed by the second
step dealing with the dividing (``partitioning'') of the list into $K+1$ sublists, and this  important aspect needs further attention. The order in which an element is compared to the different pivot elements determines how many comparisons are needed. We call these orders and how these are chosen a \emph{partition strategy}. For example, in the case of $K=2$ pivot elements, we might first compare an element with the smaller pivot, $p_1$. If the element is smaller than $p_1$, we can skip the comparison with the larger pivot. 
Aumüller, Dietzfelbinger and Klaue \cite{Aumueller2015, Aumüller2016} developed a partition strategy that is optimal (see \cite{Aumueller-Dietzfelbinger-Heuberger-Krenn:2016:dual-pivot-quick}) with respect to the expected number of comparisons. This means
an \emph{optimal $K$-pivot QuickSort} always chooses the partition strategy that minimizes the expected number of key comparisons. It is based on how many elements it has already sorted into the sublists $S_0$, \ldots, $S_K$ and determines the order in which the next element is compared to the different pivot elements. We will detail later, in Section \ref{sec:partition}, how exactly optimal  $K$-pivot QuickSort determines the optimal partition strategy.

\subsection{Highlights of the results}

In the present paper, we analyze, for a fixed $K\in\N$, the optimal $K$-pivot QuickSort when acting on $n$ elements to be sorted by considering the number $X_n$ of key comparisons. We assume that all $n$ elements are distinct and all permutations of the $n$ elements to be sorted are equally likely. We now highlight our results.
\begin{itemize}
    \item For each $K\in\N$, we obtain the ``shape'' of the asymptotics for the expected number of key comparisons to be $\E{X_n} = \alpha_K n\ln n + \beta_K n + o(n)$ as $n\to\infty$. Here, $\alpha_K$ and $\beta_K$ are suitable constants; in particular,
    $\alpha_K$ can be determined as an expected value related to the optimal partition strategy. Theorem~\ref{thm:moment} provides more details.
    The constants $\alpha_K$ in the main term of the asymptotic formula of $\E{X_n}$ approaches, as the parameter~$K$ is increased, the information-theoretic lower bound $1/{\ln 2}$, to be precise, $\lim_{K\to\infty} \alpha_K = 1/{\ln 2}$; see Theorem~\ref{prop:gamma-k} and the discussion above that theorem.
    \item In Theorem~\ref{thm:variance} we show that for each $K\in\N$ the variance $\Var(X_n)$ is asymptotically $\sigma^2_K n^2$ as $n\to\infty$ for suitable constants~$\sigma_K$. Insights on these constants are given and for $K\le6$ values are provided.
    \item For each $K\in\N$, the normalization $Y_n:=(X_n-\E{X_n})/n$ converges in distribution, as $n\to\infty$, to a $Z_K$ which can be determined from a distributional fixed-point equation. Moreover, for every $\lambda\in\R$, we have $\E{\exp(\lambda Y_n)} \longrightarrow \E{\exp(\lambda Z_K)}<\infty$ as $n\to\infty$. Details are stated in Theorem~\ref{thm_1}. 
    Additionally, Theorem~\ref{thm:smooth} states that the limit $Z_K$ has a smooth Lebesgue density which, together with all its derivatives, is rapidly decreasing.
    Furthermore, we provide asymptotic upper bounds for the rate of convergence of $Y_n$ to $Z_K$, as $n\to\infty$, in the minimal $\ell_p$ metric, $p\ge1$, and in the Kolmogorov--Smirnov metric; see Theorem~\ref{thm:speed} for details.
\end{itemize}

We present the detailed results in Section~\ref{sec:detailed-results}. The remaining paper is organized as follows: Partition strategies are discussed in Section~\ref{sec:partition}; there also related notions needed for the proofs are established. Section~\ref{sec:proofs} contains the actual proofs for all the theorems and other results of Section~\ref{sec:detailed-results}.

\section{Detailed results}
\label{sec:detailed-results}

For expediting our analysis, we assume that the items are numbers from $[0,1]$ being independent and identically, uniformly distributed. Let $p_1$, \ldots, $p_k$ be the last $k$ items%
\footnote{The choice of using the last~$k$ items of the list is for convenience and no restriction. We might choose the pivots independently and uniform from the data to be independent of the data and to address other input models as well.}
of the list and we assume these are sorted, i.e. $0 \le p_1 < \dots < p_k \le 1$. We define the \textit{spacings}
\begin{equation*}\label{def:d}
D_0:=p_1, D_1:=p_2-p_1,\dots, D_K:=1-p_K,
\end{equation*}
and set $D=\left(D_0,\dots,D_K\right)$.
Conditional on the pivots, $D_i$ is the probability that an element is to be classified into the sublist $S_i$. Moreover, still conditional on the pivots, we define the expected number of comparisons with the pivots that an element needs to be classified with the optimal strategy by $\lopt(D)$, see Section~\ref{sec:partition} for details and an extended definition, and we set
\begin{align*}\gamma_K := \E{\lopt(D)}. \end{align*} 
% Furthermore, $\log$ denotes the natural (base $e$) logarithm.

We now present our results on the asymptotic behavior of the number of key comparisons using our multi-pivot QuickSort algorithm.
\begin{thm}\label{thm:moment}
Let $K\in \N$. For the number $X_n$ of key comparisons using optimal $K$-pivot QuickSort we have 
  \begin{equation}
    \E{X_n} = \alpha_K n\ln n + \beta_K n + o(n) \quad (n\to\infty),
  \end{equation}
with a constant $\beta_K \in \R$ and $\alpha_K= \gamma_K/(H_{K+1}-1)$, where $H_n := \sum_{i=1}^n \frac1i$ are the harmonic numbers. In the case of $K≤4$, the error term $o(n)$ can be replaced by $\O(\log n)$.
\end{thm}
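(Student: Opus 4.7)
The plan is to cast the analysis of $\E{X_n}$ as a random $(K+1)$-ary search tree recursion and to invoke classical transfer theorems.

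First, I would derive the standard divide-and-conquer recursion. Conditioning on the ranks of the $K$ pivot elements, which form a uniformly random $K$-subset of $\{1,\dots,n\}$, the resulting sublist sizes $(N_0,\dots,N_K)$ have the joint distribution of the subtree sizes at the root of a random $(K+1)$-ary search tree. Writing $T_n$ for the number of key comparisons spent on the partition step, and using conditional independence of the sublist contributions,
\begin{equation*}
\E{X_n} = t_n + \frac{K+1}{\binom{n}{K}} \sum_{j=0}^{n-K} \binom{n-1-j}{K-1} \E{X_j},
\qquad t_n := \E{T_n}.
\end{equation*}

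Second, I would analyze the toll and show $t_n = \gamma_K n + \O(1)$. Each of the $n-K$ non-pivot elements contributes a number of pivot comparisons prescribed by the adaptively chosen partition strategy, which depends on the running bucket counts. As $n\to\infty$ the normalized counts concentrate at the true spacings $D$, so the expected per-element cost converges to $\lopt(D)$; taking expectation over $D$ gives $\gamma_K$. A standard concentration estimate on the empirical counts, together with the observation that the adaptive strategy is never worse than any fixed strategy, controls the $\O(1)$ remainder.

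Third, I would apply the classical transfer theorem for random $m$-ary search trees (Mahmoud; Chern--Hwang) with $m = K+1$: a toll of the form $\gamma_K n + \O(1)$ propagates to
\begin{equation*}
\E{X_n} = \frac{\gamma_K}{H_{K+1}-1}\, n \log n + \beta_K n + o(n),
\end{equation*}
matching $\alpha_K = \gamma_K/(H_{K+1}-1)$ as the known transfer coefficient for linear tolls in random $(K+1)$-ary search trees. For the refined $\O(\log n)$ bound when $K \le 4$, one needs a sharpened toll estimate $t_n = \gamma_K n + c + \O(n^{-\eta})$ for some $\eta>0$, together with the fact that for small $m = K+1$ the subdominant eigenvalues of the transfer operator leave enough spectral gap that a power-decay toll error propagates to only an $\O(\log n)$ error in the mean. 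The main obstacle throughout is precisely this sharp analysis of the toll: because the adaptive partition strategy is defined implicitly by a dynamic optimization on the running bucket counts, one must describe it concretely enough to show it coincides with the limit strategy $\topt(D)$ outside an exceptional event of polynomially small probability, and then bound the excess comparison cost on that event.
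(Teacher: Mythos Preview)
Your overall strategy---cast the recursion in random $(K{+}1)$-ary search tree form, analyze the partition toll, and invoke the Mahmoud/Chern--Hwang transfer theorems---is exactly the paper's. The paper organizes the toll analysis as a comparison with an oracle that always uses $\topt(D)$: the oracle's expected cost is exactly $\gamma_K(n-K)^+$, so its recursion is the scaled internal path length of a $(K{+}1)$-ary search tree, and the correction $\alpha_n := \E{P_n} - \gamma_K(n-K)^+$ is bounded separately and then fed into Chern--Hwang's Proposition~7.

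The substantive gap is your toll bound. The claim $t_n = \gamma_K n + \O(1)$ is not supported by what you wrote and is very likely false. Observing that the adaptive strategy beats every \emph{fixed} tree only gives $t_n \le (n-K)\min_t \E{l_t(D)}$, whose coefficient strictly exceeds $\gamma_K = \E{\min_t l_t(D)}$; it does not even pin down the leading constant. What the paper actually proves is $\alpha_n = \O((\log n)^2)$, via Hoeffding: one bounds $\Pk{t_k \ne \topt(D) \given D}$ by $\exp\bigl(-c\,k\,\psi_t(D)^2\bigr)$ for $\psi_t(D) := l_t(D) - l_{\topt}(D)$, then integrates $\psi_t(D)\exp\bigl(-c\,k\,\psi_t(D)^2\bigr)$ against the bounded density of the linear functional $\psi_t(D)$. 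Because that density is bounded away from zero near the origin, the per-step excess is of order $1/k$, so one should expect $\alpha_n$ of order $\log n$, not $\O(1)$. None of this harms the $o(n)$ conclusion---Chern--Hwang only needs $\sum_j \alpha_j j^{-2} < \infty$---but your $K\le 4$ argument does rest on it: the asserted toll $\gamma_K n + c + \O(n^{-\eta})$ fails for the same reason, so the spectral-gap transfer cannot be launched from it. The paper does not take that route for $K\le 4$ at all; it simply imports explicit expansions (down to $\O(1/n)$) computed elsewhere for small $K$.
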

In the case of $K \le 4$, there are more explicit expansions of $\E{X_n}$, see \cite{Heuberger-Krenn:2025:multi-pivot-quicksort}, given through
\begin{align}\label{eq:error-bounds}
    \E{X_n}= \alpha_K n \ln(n)+\beta_K n + \delta_K \ln(n)+\epsilon_K+ \O{\left(\frac{1}{n}\right)}.
\end{align}
For $K=3$ we have
\begin{align*}
    \begin{split}
       &\alpha_3 = \frac{133}{78}\\
       &\beta_3=\frac{133}{78}\gamma-\frac{2}{117}\sqrt{3}\pi+\frac{4}{39}\ln(3)+\frac{3}{26} \ln(2)-\frac{6761}{2028},\\
       &\delta_3=\frac{707}{468},\\
       &\epsilon_3=\frac{707}{468}\gamma+\frac{1}{702}\sqrt{3}\pi+\frac{11}{234}\ln(3)+\frac{5}{156}\ln(2)+\frac{70315}{109512}.
    \end{split}
\end{align*}
We will need these more precise expansions of $\E{X_n}$ later only to bound the rate of convergence in Theorem~\ref{thm:speed}. However, we conjecture that such expansions hold for all $K\in \N$ and that the restriction of  $K \le 4$ can be removed in our Theorem~\ref{thm:speed}.

Theorem~\ref{prop:gamma-k} below discusses the parameter $\alpha_K = \lim_{n\to\infty} \frac{\E{X_n}}{n\ln n}$, which is the constant of the leading order in the asymptotic expansion for the expected value $\E{X_n}$ (Theorem~\ref{thm:moment}), in dependence of~$K$.
By increasing $K$, we obtain $K$-pivot QuickSort algorithms where $\alpha_K$ approaches the information-theoretic lower bound $1/{\ln 2}$.
However, the number of partition strategies that need to be considered increases exponentially with $K$, rendering higher values for $K$ practically unusable. One may think about $K$ depending on $n$, however for the scope of this paper, $K$ is a fixed parameter for our QuickSort algorithm.

\begin{thm}\label{prop:gamma-k}
We have 
 $%\begin{equation}
   \lim_{K\to\infty} \alpha_K = \frac1{\ln 2}.
 $%\end{equation}
\end{thm}

We continue with considerations of the second moment.

\begin{thm}\label{thm:variance}
Let $K\in \N$. For the number $X_n$ of key comparisons using optimal $K$-pivot QuickSort we have 
  \begin{equation*}
    \Var(X_n) \sim \sigma^2_K n^2 \quad (n\to\infty),
  \end{equation*}
where
 \begin{equation}\label{sigma-k}
  \sigma^2_K=\frac{K+2}K\; \mathbb{E}\left[\left(\alpha_K \sum_{i=0}^{K} {D_i \ln(D_i)}+
\lopt(D)\right)^2\right].
  \end{equation}
For $K=3$ an explicit value is given as
\begin{equation*}
    \sigma^2_3=\frac{3051169}{657072}-\frac{17689}{36504}\pi^2+\frac{1463}{2808} \ln(2)-\frac{665}{8424}\ln(3) \approx 0.1354.
\end{equation*}
\end{thm}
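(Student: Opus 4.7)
My plan is to obtain $\sigma_K^2$ as $\Var(Z_K)$ where $Z_K$ is the $L_2$-limit of the normalized cost, and to read its value off the fixed-point equation delivered by the contraction method. I would first set up the recursion: conditioning on the spacings $D=(D_0,\dots,D_K)$, the sublist sizes $(I_0,\dots,I_K)$ are multinomial$(n-K;D)$, and with iid copies $X^{(i)}$ of the process, independent of $(D,I)$, we have
\begin{equation*}
X_n = T_n + \sum_{i=0}^K X^{(i)}_{I_i},
\end{equation*}
where $T_n$ collects the top-level partition cost together with the $O(1)$ pivot-sorting contribution. Because the top call uses the adaptive strategy $\topt$, a law-of-large-numbers argument applied to the per-element comparisons yields $T_n/n\to\lopt(D)$ in $L_2$.

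Setting $Y_n := (X_n-\E{X_n})/n$ and substituting the expansion $\E{X_m}=\alpha_K m\log m+\beta_K m+o(m)$ from Theorem~\ref{thm:moment}, the recursion rewrites as
\begin{equation*}
Y_n = \sum_{i=0}^K \frac{I_i}{n}\, Y^{(i)}_{I_i} + C_n,\qquad
C_n := \frac{T_n + \sum_i \Ex[X_{I_i}\mid I_i] - \E{X_n}}{n}.
\end{equation*}
Using $I_i/n\to D_i$ in $L_2$ I would show $C_n \to C^*(D):=\alpha_K\sum_{i=0}^{K} D_i\log D_i+\lopt(D)$ in $L_2$. The self-consistency identity $\Ex[C^*(D)]=0$ follows by combining $\alpha_K(H_{K+1}-1)=\gamma_K$ (by definition of $\alpha_K$) with the standard Dirichlet moment $\Ex[-\sum_i D_i\log D_i]=H_{K+1}-1$ for the uniform distribution on the $K$-simplex.

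The contraction ratio is $\Ex[\sum_i D_i^2]=2/(K+2)<1$ by the explicit second moments of Dirichlet$(1,\dots,1)$. The contraction method in the Wasserstein-type metric $\zeta_2$ then delivers $Y_n\to Z_K$ in $L_2$, and the limit satisfies
\begin{equation*}
Z_K \eqdist \sum_{i=0}^K D_i Z^{(i)}_K + C^*(D)
\end{equation*}
with $Z_K^{(i)}$ iid copies of $Z_K$ independent of $D$. Applying the conditional variance decomposition with respect to $D$, and using $\Ex[Z_K^{(i)}\mid D]=\Ex[Z_K]=0$ together with the independence of the $Z_K^{(i)}$, we get $\Ex[Z_K\mid D]=C^*(D)$ and $\Var(Z_K\mid D)=\sum_i D_i^2\,\Var(Z_K)$, hence
\begin{equation*}
\Var(Z_K) = \Ex\!\Bigl[\sum_{i=0}^K D_i^2\Bigr]\,\Var(Z_K) + \Ex[C^*(D)^2].
\end{equation*}
Solving with $\Ex[\sum_i D_i^2]=2/(K+2)$ yields $\sigma_K^2=\Var(Z_K)=\frac{K+2}{K}\,\Ex[C^*(D)^2]$, and the $L_2$-convergence $Y_n\to Z_K$ upgrades this to $\Var(X_n)/n^2\to\sigma_K^2$.

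The main obstacle will be the $L_2$ control $C_n\to C^*(D)$: one needs uniform $L_2$-bounds on $(T_n-n\lopt(D))/n$, reflecting the adaptive choice of partition strategy and the martingale-like accumulation of per-element comparisons, and on the remainder $\bigl(\sum_i \Ex[X_{I_i}\mid I_i]-\E{X_n}\bigr)/n - \alpha_K\sum_i D_i\log D_i$, which reduces to second-moment control of binomial fluctuations of $I_i$ around $(n-K)D_i$ and a careful treatment of the small-sublist regime $I_i<K$ where an elementary sorting routine takes over. Once these bounds are in place, the contraction step is routine, and the numerical value for $K=3$ emerges from integrating a piecewise polynomial against the Lebesgue measure on the $3$-simplex, reproducing the stated $\sigma_3^2\approx 0.1354$.
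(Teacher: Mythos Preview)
Your proposal is correct and follows essentially the same route as the paper: establish the $L_2$ limit $Y_n\to Z_K$ via the contraction method (this is the paper's Theorem~\ref{thm_1}, whose proof in Section~\ref{sec:proof_thm1} verifies exactly the $L_2$ convergences $I_i/n\to D_i$, $P_n/n\to\lopt(D)$, and the toll term convergence you outline), and then read off $\sigma_K^2=\Var(Z_K)$ from the fixed-point equation via the conditional variance decomposition, using $\Ex\bigl[\sum_i D_i^2\bigr]=2/(K+2)$. The paper simply states that Theorem~\ref{thm:variance} ``follows as a corollary to Theorem~\ref{thm_1}''; your write-up spells out that corollary explicitly and correctly. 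One small inaccuracy: for $K=3$ the integrand $C^*(D)^2$ is not piecewise polynomial because of the $D_i\log D_i$ terms, which is why $\pi^2$ and $\log 2$, $\log 3$ appear in the closed form.
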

Note that $\sigma^2_1=(21-2\pi^2)/3$ (an exact expression for $\Var(X_n)$ for $K=1$ is given in Knuth \cite{kn73}), and $\sigma^2_2$ has been derived in \cite{straub}. Numerical values for  $\sigma^2_K$ for $K\in\{4,5,6\}$ are given in Table \ref{table1}.

Next we consider the limiting distribution.

\begin{thm}\label{thm_1}
Let $K\in \N$ and $\lambda\in \R$. For the normalization $Y_n:=(X_n-\E{X_n})/n$ as $n\to\infty$ we have 
\begin{align*}
   Y_n&\stackrel{d}{\longrightarrow} Z_K,\\
   \E{\exp(\lambda Y_n)} &\longrightarrow \E{\exp(\lambda Z_K)}<\infty,
\end{align*}
where the distribution of $Z_K$ is determined as the unique centered, square integrable probability measure such that
\begin{align}\label{fixpoint}
Z_K \overset{\text{d}}{=} \sum_{i=0}^{K} \kl*{D_i Z_K^{(i)}+\alpha_K D_i \ln\left(D_i\right)}+\lopt(D),
\end{align}
where $D,Z_K^{(0)},\dots,Z_K^{(K)}$ are independent and $Z_K^{(i)}$ has the same distribution as $Z_K$ for all $i=0,\dots,K$.
Moreover, we have moments convergence, i.e., all moments of $Y_n$ converge to the moments of $Z_K$.

\end{thm}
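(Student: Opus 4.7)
The plan is to apply the contraction method to the natural recursive decomposition of $X_n$. Conditional on the $K$ sorted pivot values with spacings $(D_0^{(n)},\ldots,D_K^{(n)})$, which converge almost surely and in every $L_p$ to the order spacings $D=(D_0,\ldots,D_K)$ of $K$ i.i.d.\ uniforms on $[0,1]$, the list splits into subproblems of sizes $(I_0,\ldots,I_K)$ with $I_i/n\to D_i$, and
\begin{equation*}
X_n \stackrel{d}{=} \sum_{i=0}^{K} X_{I_i}^{(i)} + T_n,
\end{equation*}
where $(X_m^{(i)})_{m\ge 0}$, $i=0,\ldots,K$, are independent copies of $(X_n)$, independent of $(I_0,\ldots,I_K)$, and $T_n$ counts the pivot-related comparisons in the first partition. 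Because optimal $K$-pivot QuickSort asymptotically plays the strategy $\topt(D)$ on large sublists, a law-of-large-numbers argument together with the deterministic bound $0\le T_n\le Kn + O(1)$ yields $T_n/n\to \lopt(D)$ in $L_p$ for every $p\ge 1$. Using the expansion of $\e X_n$ from Theorem~\ref{thm:moment}, the normalization $Y_n=(X_n-\e X_n)/n$ satisfies
\begin{equation*}
Y_n \stackrel{d}{=} \sum_{i=0}^{K} \frac{I_i}{n}\, Y_{I_i}^{(i)} + b_n,
\end{equation*}
where the toll $b_n$, the sum of $T_n/n$ with a bounded mean-correction term involving $(I_i/n)\log(I_i/n)$, converges in every $L_p$ to $\alpha_K\sum_{i=0}^{K} D_i\log D_i+\lopt(D)$, exactly the toll of~\eqref{fixpoint}.

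Next I would invoke the Neininger--Rüschendorf framework in the Zolotarev metric $\zeta_3$. The map on probability measures associated with~\eqref{fixpoint} contracts on the complete space of centered, square-integrable laws because a direct computation with $D_i\sim\mathrm{Beta}(1,K)$ gives
\begin{equation*}
\e\sum_{i=0}^{K} D_i^3 = (K+1)\cdot\frac{6}{(K+1)(K+2)(K+3)} = \frac{6}{(K+2)(K+3)} < 1
\end{equation*}
for every $K\ge 1$, so Banach's fixed point theorem yields a unique centered, square-integrable $Z_K$ solving~\eqref{fixpoint}. The standard contraction estimates, combined with the $L_3$ convergences $I_i/n\to D_i$ and $b_n\to \alpha_K\sum_i D_i\log D_i+\lopt(D)$ and the variance convergence supplied by Theorem~\ref{thm:variance}, imply $\zeta_3(Y_n,Z_K)\to 0$, hence $Y_n\convdist Z_K$ together with convergence of all polynomial moments.

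The main obstacle is the exponential moment statement, since $\zeta_3$ convergence does not automatically deliver convergence of $\e\exp(\lambda Y_n)$. My plan is to adapt Fill and Janson's strategy for classical QuickSort \cite{Fill2002}: set $M_n(\lambda):=\e\exp(\lambda Y_n)$ and exploit the recursion
\begin{equation*}
M_n(\lambda)=\e\!\left[\exp(\lambda b_n)\prod_{i=0}^{K} M_{I_i}\!\left(\lambda\,\frac{I_i}{n}\right)\right].
\end{equation*}
The toll $b_n$ is uniformly bounded in $n$ (since $T_n/n\le K+o(1)$, $x\mapsto x\log x$ is bounded on $(0,1]$, and $\lopt\le K$), so $\exp(\lambda b_n)$ has all moments uniformly. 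A convex-analytic induction on $n$, exploiting that the scaled arguments $\lambda I_i/n$ lie strictly below $\lambda$ on the nontrivial part of the simplex, first yields $\sup_n M_n(\lambda)<\infty$ on a neighborhood of the origin and, by bootstrapping along the recursion, for all $\lambda\in\R$. This supplies the uniform integrability required to upgrade distributional convergence to convergence of all exponential moments, and yields $\e\exp(\lambda Z_K)<\infty$ as a byproduct.
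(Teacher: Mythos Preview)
Your overall architecture---contraction method for the limit law plus a R\"osler/Fill--Janson induction for the exponential moments---matches the paper. However, your choice of the Zolotarev metric $\zeta_3$ creates two problems that the paper's $\ell_2$ approach sidesteps. First, $\zeta_3(Y_n,Z_K)$ is finite only when $\Var(Y_n)=\Var(Z_K)$ exactly, which fails for finite $n$; to make the argument run you would have to renormalize by the true standard deviation, and for that you need the variance asymptotics as a separate input. Second, you invoke Theorem~\ref{thm:variance} for that input, but in the paper the variance result is obtained \emph{as a corollary} of the present theorem (via the $\ell_2$ convergence it yields), so your argument is circular as organized. The paper works instead in the minimal $\ell_2$ metric, where the contraction constant $\sum_{i=0}^K\E{D_i^2}=(K+1)\cdot \tfrac{2}{(K+1)(K+2)}=\tfrac{2}{K+2}<1$ already suffices and no variance input is needed; note also that $\zeta_3$ convergence does not by itself deliver convergence of all polynomial moments as you assert.

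The substantive analytic step you pass over is $T_n/n\to\lopt(D)$ in $L_2$. ``A law-of-large-numbers argument'' is the right instinct, but the algorithm's choice of classification tree at step $k$ depends on the running sublist counts $C^{(k)}$, not on $D$, so one must show that the empirical choice eventually coincides with $\topt(D)$. The paper does this by associating to each competing pair of trees a simple random walk built from indicator differences; conditionally on $D$ these walks have nonzero drift almost surely, so by the strong law they eventually keep a fixed sign and the algorithm uses $\topt(D)$ from some random time $n_0$ onward. That is where the real work lies, and your sketch does not yet contain it. Your treatment of the moment generating function, on the other hand, is essentially the paper's: the toll $b_n$ is uniformly bounded, $\sum_i(I_i^{(n)}/n)^2<1$ almost surely, and the R\"osler-type inductive bound $\E{e^{\lambda Y_n}}\le e^{M_L\lambda^2}$ goes through.
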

\begin{rem} We conjecture that the restriction to square integrability in Theorem~\ref{thm_1} can be weakened to integrability in view of \cite{fija00ecp}.
\end{rem}

\begin{table}
\renewcommand{\arraystretch}{1.5}
\begin{tabular}{c c c l l}
    $K$ & $\gamma_K$ & $\alpha_K$ &  & $\sigma^2_K$\\
    \hline
     1 &1 &2 & $=2$ & 0.420\\
2& $\frac{3}{2}$ &$\frac{9}{5}$ &$=1.8$ & 0.242 \\
3 &$\frac{133}{72}$ &$\frac{133}{78}$ &$\approx1.7051$ & 0.135\\
4 & $\frac{2384}{1125}$ &$\frac{9536}{5775}$ &$\approx1.6513$ & 0.083\\
5 &$\frac{36469}{15552}$& $\frac{182345}{112752}$& $\approx1.6172$ & 0.056 \\
6 & $\frac{31796145419183}{12522149640000}$ &$\frac{31796145419183}{19945995498000}$ & $\approx1.5941$ 
& 0.040
\end{tabular}
\caption{Mean and variance for small $K$. The variance is approximated using a Monte-Carlo method on \eqref{sigma-k} with one million samples of $D$.\label{table1}}
\end{table}
It is well known that limit distributions as in Theorem~\ref{thm_1} have smooth densities, first shown by Fill and Janson \cite{fija00} for the case $K=1$. A criterion of Leckey \cite{leckey19} implies the following.
\begin{thm}\label{thm:smooth}
Let $K\in \N$. The limit $Z_K$ has a smooth Lebesgue density which, together with all its derivatives, is rapidly decreasing, i.e., faster than any polynomial. 
\end{thm}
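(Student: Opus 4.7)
The plan is to verify the hypotheses of Leckey's smoothness criterion from \cite{leckey19} for solutions of smoothing-type stochastic fixed-point equations. Rewriting \eqref{fixpoint} as
\[
Z_K \stackrel{d}{=} \sum_{i=0}^{K} D_i Z_K^{(i)} + b(D),
\qquad
b(D) := \alpha_K \sum_{i=0}^{K} D_i \log D_i + \lopt(D),
\]
we identify the coefficients $A_i = D_i$, the toll $b(D)$, and have $(D, Z_K^{(0)}, \ldots, Z_K^{(K)})$ independent.

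First I would verify the integrability conditions. The coefficients $D_i$ lie in $[0,1]$ with $\sum_i D_i = 1$ almost surely, and since $D_i<1$ a.s., we have $\sum_{i=0}^K \E{D_i^s} < 1$ for every $s>1$, giving strict contraction. The toll $b(D)$ is uniformly bounded, since $|x\log x|\le 1/e$ on $[0,1]$ and $\lopt(D)\le K$, and hence has exponential moments of all orders. Exponential moments of $Z_K$ itself are already provided by Theorem \ref{thm_1}.

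Next I would check the regularity condition on the coefficient distribution. The vector $(D_0, \ldots, D_{K-1})$ of spacings generated by $K$ i.i.d.\ uniform variables on $[0,1]$ is $\mathrm{Dirichlet}(1,\ldots,1)$-distributed and hence has a uniform, in particular strictly positive and absolutely continuous, density on the open simplex $\{d_i>0,\ \sum_i d_i < 1\}$. This supplies, in a robust form, the absolutely continuous component of the coefficient law required by Leckey's criterion, and does so on a full-dimensional open set.

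Combining strict contraction, boundedness of the toll, and absolute continuity of the law of $D$, Leckey's criterion then yields a $C^\infty$ Lebesgue density for $Z_K$ whose derivatives all decay faster than any inverse polynomial. The main subtlety is that $\lopt(D)$ is only piecewise smooth in $D$, with jumps along the boundaries where the optimal strategy $\topt$ changes; however, only boundedness and measurability of the toll enter the criterion, so this piecewise structure is harmless.
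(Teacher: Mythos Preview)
You have correctly identified that the result follows from Leckey's criterion \cite{leckey19}, and the paper does exactly this. But the hypotheses you verify are not those of Leckey's theorem. Leckey's criterion does not ask for strict $\ell_s$-contraction, boundedness or exponential moments of the toll, or an ``absolutely continuous component of the coefficient law''. Its actual hypotheses, in the form the paper quotes, concern the order statistics of the coefficients $A_i=D_i$ together with non-degeneracy of the fixed point: (1) the largest coefficient $\alpha^{\max}$ is almost surely bounded below by some $a>0$; (2) the second largest $\asec$ satisfies a polynomial small-ball bound $\mathbb{P}(\asec\le x)\le \lambda x^\nu$; (3) $A_i\le 1$ a.s.; (4) $Z_K$ is not a.s.\ constant; (5) with positive probability at least one $A_i$ lies in $(0,1)$.

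Your observation that $D$ is Dirichlet and hence uniform on the simplex is relevant---it is precisely what one uses to check (2)---but you do not actually derive the small-ball bound from it. You also omit (1) (immediate from $\sum_i D_i=1$, giving $\alpha^{\max}\ge 1/(K+1)$) and, more importantly, (4): non-degeneracy of $Z_K$ is not addressed at all in your proposal. The paper obtains (4) from $\sigma_K^2>0$ via Theorem~\ref{thm:variance}. Conversely, the toll-related conditions you do check play no role in Leckey's hypotheses. So the route is right, but as written the argument has a gap: you invoke the theorem without establishing its stated assumptions.
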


Finally, we give bounds on the rate of convergence in Theorem~\ref{thm_1} for $K\in\{2,3,4\}$. For the case $K=1$ see \cite{Fill2002}, for $K\ge 5$ we do not know the asymptotic behavior of $\E{X_n}$ well enough. We use the minimal $\ell_p$ metrics for $p \ge 1$ given by
\[
\ell_p\left(\mu,\nu\right):=\inf\left\{\left\Vert X-Y \right\Vert_p: L(X)=\mu, L(Y)=\nu \right\}
\]
on the space of real-valued probability measures with a finite absolute $p$-th moment, where $\left\Vert \cdot \right\Vert_p$ denotes the $L_p$-norm.  Moreover, we use the Kolmogorov--Smirnov metric defined as
\[
\varrho\left(\mu,\nu \right):=\sup_{x \in \R} \abs[\big]{F_\mu(x)-F_\nu(x)},
\]
where $F_\mu$ and $F_\nu$ denote the distribution functions of $\mu$ and $\nu$ respectively. The $\ell_p$ and $\varrho$ distance between two random variables is to be seen as the distance of their probability distributions. We have the following statement.
\begin{thm}\label{thm:speed}
For $K\in\{2,3,4\}$, all $p \ge 1$ and all $\varepsilon>0$ we have, as $n\to\infty$, that
\begin{align*}
\ell_p\left(Y_n,Z_K\right)=\mathrm{O}\left(n^{-1/2+\varepsilon}\right),\qquad 
\varrho\left(Y_n,Z_K\right)=\mathrm{O}\left(n^{-1/2+\varepsilon}\right).
\end{align*}
\end{thm}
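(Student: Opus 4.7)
The plan is to apply the quantitative contraction method of Neininger and Rüschendorf in the $\ell_p$-metric, and then to transfer the resulting rate to the Kolmogorov--Smirnov distance using the density bound supplied by Theorem \ref{thm:smooth}. The restriction to $K\le 4$ enters only through the toll estimate, which relies on the refined mean expansion \eqref{eq:error-bounds}; the rest of the argument is uniform in $K$.

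The first step is to write down a distributional recursion. Let $I_0^{(n)},\dots,I_K^{(n)}$ be the random sublist sizes produced by the partitioning step and let $T_n$ be the total number of key comparisons between pivots and non-pivots in that step. Since the subproblems are sorted by independent copies of the algorithm,
\[
X_n \eqdist \sum_{i=0}^{K} X_{I_i^{(n)}}^{(i)} + T_n,
\]
and centering with $\mu_n := \E{X_n}$ and dividing by $n$ gives
\[
Y_n \eqdist \sum_{i=0}^{K} \frac{I_i^{(n)}}{n}\, Y_{I_i^{(n)}}^{(i)} + C_n,
\qquad
C_n := \frac{1}{n}\left(\sum_{i=0}^{K} \mu_{I_i^{(n)}} + T_n - \mu_n\right).
\]

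The main technical task is to show $\lVert C_n - C^\ast\rVert_p = \O(n^{-1/2+\varepsilon})$ for every $p \ge 1$, where $C^\ast := \sum_{i=0}^K \alpha_K D_i \log D_i + \lopt(D)$ is the toll appearing in \eqref{fixpoint}. Substituting \eqref{eq:error-bounds} into $\mu_{I_i^{(n)}}$ and Taylor-expanding $I_i^{(n)}\log I_i^{(n)}$ around $nD_i \log(nD_i)$, the multinomial fluctuation $I_i^{(n)} - nD_i$, which has size $n^{1/2}$ in every $L_p$, is amplified only by a $\log n$ factor and then suppressed by the $1/n$ normalization, yielding the announced $n^{-1/2+\varepsilon}$ rate; the $\delta_K\log n$, $\epsilon_K$ and $\O(1/n)$ terms in \eqref{eq:error-bounds} contribute strictly smaller errors. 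A parallel analysis of $T_n$ is required: combinatorial control of the partitioner together with exponential concentration of the empirical spacings around $D$ shows that outside an event of exponentially small probability the algorithm uses the limiting optimal strategy $\topt(D)$, and $T_n - n\lopt(D)$ is of the same order as the fluctuations above. The uniform integrability that upgrades these bounds from in-probability to $L_p$ for every $p$ follows from the exponential moment convergence already established in Theorem \ref{thm_1}.

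With the toll estimate at hand, the Neininger--Rüschendorf inductive lemma closes the loop: the spacings $(D_0,\dots,D_K)$ are Dirichlet$(1,\dots,1)$, so $\E{\sum_{i=0}^K D_i^2} = 2/(K+2) < 1$ and hence $\E{\sum_{i=0}^K D_i^p} < 1$ for every $p \ge 2$, making the underlying map a strict contraction in $\ell_p$. A standard induction on $n$ then yields $\ell_p(Y_n, Z_K) = \O(n^{-1/2+\varepsilon})$ for every $p \ge 2$, and the range $1 \le p < 2$ follows from the monotonicity $\ell_1 \le \ell_2$. Finally, Theorem \ref{thm:smooth} supplies a bounded density of $Z_K$, so the classical smoothing inequality
\[
\varrho(Y_n, Z_K) \le C\, \ell_p(Y_n, Z_K)^{p/(p+1)}
\]
holds for every $p \ge 1$; letting $p$ be sufficiently large converts the $\ell_p$-rate $n^{-1/2+\varepsilon}$ into the Kolmogorov--Smirnov rate $n^{-1/2+\varepsilon'}$ with $\varepsilon'$ arbitrarily small. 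The principal obstacle is the toll estimate: the algorithm's partitioning strategy is adaptive in the observed empirical sublist sizes rather than in the idealized spacings $D$, and dovetailing the combinatorial analysis of the partitioner with the probabilistic concentration of the spacings so as to produce $L_p$-bounds uniform in $n$ is the most delicate part of the argument.
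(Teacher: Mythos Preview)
Your outline is correct and follows the paper's approach: establish a toll bound $\|b_n-b\|_p=\O(n^{-1/2})$ (the paper's Lemma~\ref{toll}), run an induction in $\ell_p$, and transfer to $\varrho$ using the bounded density from Theorem~\ref{thm:smooth}. Two places deserve tightening. First, the contraction constant the induction actually requires is not $\E{\sum_i D_i^p}<1$ but the borderline quantity $(K+1)\E{D_0^{1+p\varepsilon}}$, which equals $1$ at $\varepsilon=0$; the paper isolates this as Lemma~\ref{tech} and carries out an explicit double induction (on $n$ for $p=2$, then on $p\ge 3$ via the Fill--Janson inequality~\eqref{independent}) rather than citing a packaged lemma---this borderline is exactly why the $\varepsilon$ in the rate cannot be removed by this method. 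Second, uniform integrability via Theorem~\ref{thm_1} would give $L_p$ convergence of $C_n$ but not an $L_p$ \emph{rate}; the paper instead bounds $\|b_n-b\|_p$ directly, using Marcinkiewicz--Zygmund for the multinomial fluctuations $I_i^{(n)}/n-D_i$ and a last-passage argument (the random $n_0$ from the proof of Theorem~\ref{thm_1}) controlling how long the adaptive partitioner can use a suboptimal classification tree.
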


\section{Partition strategies and probabilistic set-up}\label{sec:partition}
A partition strategy splits a list of~$n-K$ elements (i.e. excluding the $K$ pivots) into sublists $S_0$, \ldots, $S_K$. It
uses binary search trees with the pivots $p_1$, \ldots, $p_K$ as internal nodes and the lists $S_0$, \ldots, $S_K$ associated to the external nodes. Note that there is only one way of placing the pivots in a given binary search tree as such trees give rise to an ordering. We call these trees \emph{classification trees} and in Figure~\ref{figure:all-comparison-trees-d=3} we see the classification trees for $K=3$ pivot elements.
\begin{figure}
  \centering
  \begin{subfigure}[b]{0.3\columnwidth}
    \centering
    \begin{tikzpicture}[scale=0.6]
      \node [circle,draw] {$p_1$}
      child {node [draw] {$S_0$}
      }
      child {node [circle,draw] {$p_2$}
        child {node [draw] {$S_1$}}
        child {node [circle,draw] {$p_3$}
          child { node [draw] {$S_2$}}
          child { node [draw] {$S_3$}}
        }
      };
    \end{tikzpicture}
    \caption{Classification tree $t_{1}$}
  \end{subfigure}
  \hspace*{1em}
  \begin{subfigure}[b]{0.3\columnwidth}
    \centering
    \begin{tikzpicture}[scale=0.6]
      \node [circle,draw] {$p_1$}
      child {node [draw] {$S_0$}
      }
      child {node [circle,draw] {$p_3$}
        child {node [circle,draw] {$p_2$}
          child { node [draw] {$S_1$}}
          child { node [draw] {$S_2$}}
        }
        child {node [draw] {$S_3$}}
      };
    \end{tikzpicture}
    \caption{Classification tree $t_{2}$}
  \end{subfigure}
  \hspace*{1em}
  \begin{subfigure}[b]{0.3\columnwidth}
    \centering
    \begin{tikzpicture}[level/.style={sibling distance=30mm/#1}, scale=0.6]
      \node [circle,draw] {$p_2$}
      child {node [circle,draw] {$p_1$}
        child { node [draw] {$S_0$}}
        child { node [draw] {$S_1$}}
      }
      child {node [circle,draw] {$p_3$}
        child { node [draw] {$S_2$}}
        child { node [draw] {$S_3$}}
      }
      ;
    \end{tikzpicture}
    \caption{Classification tree $t_3$}
    \label{figure:balanced-comparison-tree-d=3}
  \end{subfigure}
  \begin{subfigure}[b]{0.3\columnwidth}
    \vspace*{5ex} % for having some distance between the figures
    \centering
    \begin{tikzpicture}[scale=0.6]
      \node [circle,draw] {$p_3$}
      child {node [circle,draw] {$p_1$}
        child {node [draw] {$S_0$}}
        child {node [circle,draw] {$p_2$}
          child { node [draw] {$S_1$}}
          child { node [draw] {$S_2$}}
        }
      }
      child {node [draw] {$S_3$}}
      ;
    \end{tikzpicture}
    \caption{Classification tree $t_4$}
  \end{subfigure}
  \hspace*{1em}
  \begin{subfigure}[b]{0.3\columnwidth}
    \centering
    \begin{tikzpicture}[scale=0.6]
      \node [circle,draw] {$p_3$}
      child {node [circle,draw] {$p_2$}
        child {node [circle,draw] {$p_1$}
          child { node [draw] {$S_0$}}
          child { node [draw] {$S_1$}}
        }
        child {node [draw] {$S_2$}}
      }
      child {node [draw] {$S_3$}}
      ;
    \end{tikzpicture}
    \caption{Classification tree $t_5$}
  \end{subfigure}
  \caption{All classification trees for $K=3$. These are all possible binary search trees for $p_1,p_2,p_3$.
  %The pivots and sublists have to be ordered in in-order.
  %An element to be compared starts at the root, and goes right if it is bigger than the pivot $p_i$ and left otherwise. It then ends at a sublist $S_j$
  }
  \label{figure:all-comparison-trees-d=3}
\end{figure}

Classifying an element according to a classification tree is done as follows: The element starts at the root node of the tree where it is compared to the pivot in that node. If the element is bigger, it proceeds to the right, otherwise to the left. This decision process is repeated until it reaches an external node with associated sublist~$S_i$ and it is appended to this list~$S_i$. We do this for all $n-K$ elements in the list; in each step we might choose a different classification tree according to our partition strategy. A partition strategy minimizing the expected number of key-comparisons~$\mathbb{E}[X_n]$ is called \textit{optimal}, see~\cite{Heuberger-Krenn:2025:multi-pivot-quicksort}.

We now set-up notions to analyze optimality of partition strategies. Let us fix~$K$. The set of all classification trees for this~$K$ is called $\mathcal{T}$. The number of trees in~$\mathcal{T}$ is given by the Catalan number $\frac1{K+1}\binom{2K}{K}$.
We define the linear function
\begin{equation}\label{def:lt}
    l_t(x_0,\dots,x_K):=\sum_{i=0}^K x_ih_t(i)
\end{equation}
on $\mathbb{R}^{K+1}$ (\cite{Heuberger-Krenn:2025:multi-pivot-quicksort}), where $h_t(i)$ is the depth of external node with sublist $S_i$ in the tree $t\in\mathcal{T}$. Note that $h_t(i)$ is also the number of comparisons needed to classify an element by tree~$t$ into sublist~$S_i$.

We now focus on the process of classifying each of the $n-K$ elements in the list.
Let $C^{(k)}_i$ be the number of elements already classified in the list $S_i$ prior to classifying the $k$-th element; and set $C^{(k)} := \kl{C^{(k)}_0, \dots, C^{(k)}_{K}}$. The work~\cite{Aumüller2016} showed that, given $C^{(k)}$, the probability that the $k$-th element is classified in the sublist $S_i$ is proportional to $C^{(k)}_i+1$ (see \cite{Aumueller2015} for the special case of $K=2$). Therefore, the optimal $K$-pivot QuickSort algorithm chooses the classification tree $t\in\mathcal{T}$ that minimizes $l_t(C^{(k)}+1)$. In case of a tie, we use an arbitrary but fixed order on $\mathcal{T}$ and choose the smallest of those trees. 
We write $\gtk$ for the event that the $k$-th element is classified according to a tree $t$.

Let us now focus on the limiting behavior. Given $D$ (see beginning of Section~\ref{sec:detailed-results}), the probability for an element to go into sublist $S_i$ is given by $D_i$. Therefore, $l_t(D)$ is the expected number of comparisons for classifying an element. We define the classification tree $\topt$ as the tree~$t\in\mathcal{T}$ that minimizes $l_t(D)$. (We denoted that quantity by $\lopt(D)$ in Section~\ref{sec:detailed-results}). Note that there is almost surely only a single minimizer $\topt$. We also write $\hopt$ for $h_{\topt}$.
Furthermore, for a tree $t \in \mathcal{T}$, we define its \textit{asymptotic cone} as
\[
    C_t:=\mg[\big]{\left(x_0,\dots,x_K\right)\in \mathbb{R}_{\ge 0}^{K+1} : l_t\left(x_0,\dots,x_K\right) \le l_{t'}\left(x_0,\dots,x_K\right) \text{ for all } t' \in \mathcal T}.
\]
Note that by definition, $D \in C_{\topt}$.

As for the $K$-pivot QuickSort algorithm, the true probabilities $D$ are not known when classifying the $k$-th element and so the algorithm approximates these probabilities using $C^{(k)}/k$, quantities that converges to~$D$. Therefore, as the algorithm classifies more elements, it updates its strategy to reflect changes in $C^{(k)}/k$.

Let us come back to the probabilistic part of the analysis.
The number of key comparisons to classify each of the $n-K$ elements of a list into $K+1$ sublists is denoted by $P_n$. We may write this quantity as
\begin{equation}\label{def:pn}
    P_n = \sum_{k=1}^{n-K} \sum_{t\in\mathcal T}\sum_{i=0}^K\indgt\eins_{\mg{a_k\in S_i}} h_t(i) + R_K,
\end{equation}
where $a_1$, \ldots, $a_n$ denote the elements of the unsorted list. The term $R_K$ is the number of comparisons required to sort the $K$ pivots, and, by choosing an appropriate sorting algorithm, we can ensure $R_K \le K^2$. In particular, for fixed $K$ we have $R_K = \O(1)$ as $n\to\infty$.

Writing $I_i^{(n)} := \abs{S_i}$ for the sublist sizes, $X_n$ can thus be defined by the recursion
\begin{equation}\label{x-recursion}
    X_n \overset d= \sum_{i=0}^K X^{(i)}_{I_i^{(n)}} + P_n
\end{equation}
for $n≥K$,
where $X^{(i)}_k$ for $k\in \N$ are copies of $X_k$ that are independent of each other and of the $(I_0^{(n)}, \dots, I_K^{(n)},P_n)$, $n≥K$.

\section{Proofs}
\label{sec:proofs}

The proofs of our results are organized as follows. In Section \ref{proof_exp} we show the expansion of the mean of $X_n$ from \Cref{thm:moment} with the help of classical results on the $m$-ary search trees. Theorem~\ref{thm_1} is shown by an application of the contraction method in Section \ref{sec:proof_thm1}. The framework of the contraction method used is recalled in Section \ref{sec:proof_thm1}. Theorem~\ref{thm:variance} then follows as a corollary to Theorem~\ref{thm_1}. The properties of the densities of the limit distributions of Theorem~\ref{thm:smooth} are shown in Section \ref{sec:smooth}. The bounds on the rate of convergence from Theorem~\ref{thm:speed} are finally shown in Section \ref{sec:rate}. Here, we need to develop a couple of new estimates, since the problem for $K\in\{2,3,4\}$ is more involved compared to the case $K=1$.
\subsection{Asymptotic expansion of \texorpdfstring{$\E{X_n}$}{the first moment}} \label{proof_exp}
If the algorithm knew $D$ beforehand, it could always use the asymptotically optimal classification tree $\topt$ where $l_t(D)$ is minimal, see \eqref{def:lt}.
This ``oracle strategy'' uses only one tree and is easier to analyze. However, we show, see Lemma~\ref{lem:nah-an-opt}, that the difference
between the actual strategy and the oracle strategy in the number of comparisons is minor. Furthermore, the number of comparisons when using $\topt$ is closely related to the expected internal path length of a $K+1$-ary search tree, which shows the asymptotics of $\E{X_n}$ in \Cref{thm:moment}.

% QuickSort recursively calls itself up to $K+1$ times on sublists. The call structure thus forms a $K+1$-ary tree, which is distributed as a $K+1$-ary search tree. This can, for example, be seen by noting the spacings $D$ in \eqref{def:d} are the same as in a split tree description of $K+1$-ary search trees as in \cite{holmgren:path-length}. The nodes of this tree, corresponding to calls of QuickSort on a sublist $S$, can be numbered with strings $\phi\in \paths$ using Ulam-Harris notation: The root is the empty string, and the string $\phi i$ corresponds to the $i$-th child of node $\phi$. We write $\abs{T^\phi_n}$ for the number of elements in the sublist at $\phi$. If there is no call at $\phi$, we define $\abs{T^\phi_n} := 0$. With this notation, the internal path length of $T_n$ is given by $\sum_{\phi\in \paths} \abs{T^\phi_n}$.

\begin{lem}\label{lem:nah-an-opt}
    The additional cost of the algorithm compared to always using $\topt$ is of sub-polynomial order:
    \begin{equation*}
    \alphan := \Ek{P_n} - \Ek*{\sum_{i=0}^K I^{(i)}_n \hopt(i)} = \O\kl*{(\log n)^2}
    \end{equation*}
\end{lem}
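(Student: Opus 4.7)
The plan is to reduce $\alpha_n$ to a sum of per-item excess contributions, bound each by a concentration argument for the empirical bin proportions, and integrate over $D$. First I would use the tower property: conditionally on $D$ and on the counts $C^{(k)}$ (which determine $t_k$), the $k$-th item falls into $S_i$ with probability $D_i$, so its expected number of comparisons equals $l_{t_k}(D)$ under the algorithm and $l_{\topt}(D)$ under the oracle. Summing over $k$ and absorbing the $R_K=\O(1)$ pivot-sorting cost,
\begin{align*}
\alpha_n = \sum_{k=1}^{n-K}\E{Z_k} + \O(1), \qquad Z_k := l_{t_k}(D)-l_{\topt}(D) \ge 0.
\end{align*}

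Two deterministic inequalities for $Z_k$ are the heart of the argument. Set $\tilde D^{(k)} := (C^{(k)}+1)/(k+K)$; by construction, $t_k$ minimizes $l_t(\tilde D^{(k)})$ over $t\in\mathcal T$. Each $l_t$ is linear with coefficients $h_t(i)\in\{1,\dots,K\}$, hence is $K$-Lipschitz in the $\ell_1$-norm, and combining Lipschitz continuity with the optimality of $t_k$ at $\tilde D^{(k)}$ gives
\begin{align*}
Z_k \le \bigl[l_{t_k}(D)-l_{t_k}(\tilde D^{(k)})\bigr] + \bigl[l_{\topt}(\tilde D^{(k)})-l_{\topt}(D)\bigr] \le 2K\,\|D-\tilde D^{(k)}\|_1.
\end{align*}
Moreover, introducing the margin $\delta(D):=\min_{t\neq\topt}\bigl(l_t(D)-l_{\topt}(D)\bigr)$, the same computation forces $t_k=\topt$, and hence $Z_k=0$, whenever $\|D-\tilde D^{(k)}\|_1 < \delta(D)/(4K)$.

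For the probabilistic step I would condition on $D$. Since $C^{(k)}_i\sim\mathrm{Bin}(k-1,D_i)$, Hoeffding's inequality yields $\P{\|\tilde D^{(k)}-D\|_1 \ge \delta(D)/(4K) \mid D} \le C\,e^{-c\delta(D)^2 k}$ for $k$ large enough to absorb the $\O(1/k)$ mean bias separating $\tilde D^{(k)}$ from the true sample mean, while $\E{\|\tilde D^{(k)}-D\|_1^2 \mid D} = \O(1/k)$ is elementary. Cauchy--Schwarz combined with both bounds on $Z_k$ then gives $\E{Z_k \mid D} \le C' k^{-1/2}\,e^{-c\delta(D)^2 k/2}$, and summing this Gaussian-type series yields $\sum_{k=1}^n \E{Z_k \mid D} \le C''/\delta(D)$ (also capped by the trivial bound $Kn$).

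To finish I would integrate in $D$. Since $(D_0,\dots,D_K)$ is uniform on the simplex and distinct classification trees have distinct depth vectors $(h_t(i))_i$, every boundary hyperplane $\{l_t=l_{t'}\}$ is genuinely codimension one inside the simplex, so a routine geometric computation gives the anti-concentration bound $\P{\delta(D)<\varepsilon}=\O(\varepsilon)$. Integrating $\min(Kn,\,C''/\delta(D))$ against this tail then yields $\alpha_n=\O(\log n)$, which is in fact slightly stronger than the claimed $\O((\log n)^2)$. I expect the main obstacle to be a clean verification of the anti-concentration bound together with the bookkeeping around the additive $\O(1/k)$ bias in $\tilde D^{(k)}$; the concentration estimate and the summation over $k$ are routine once the two deterministic bounds on $Z_k$ are in place.
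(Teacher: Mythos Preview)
Your argument is correct and rests on the same two pillars as the paper's proof: Hoeffding concentration of the empirical bin counts and anti-concentration of linear functionals of the uniform simplex vector $D$. The organization is genuinely different, however. The paper fixes a non-optimal tree $t$, bounds $\Ek{\Pk{a_k\in G_t\mid D}\psi_t(D)}$ for each $k$ separately by splitting on whether $\psi_t(D)$ exceeds a threshold of order $\sqrt{\log n/k}$, and obtains $\O(\log n/k)$ per item, hence $\O((\log n)^2)$ after the sum over $k$. You instead package all non-optimal trees into the single margin $\delta(D)$, sum the Gaussian-type bound $\Ek{Z_k\mid D}\le C' k^{-1/2}e^{-c\delta(D)^2 k/2}$ over $k$ \emph{before} integrating in $D$, and only then invoke the tail bound $\Pk{\delta(D)<\varepsilon}=\O(\varepsilon)$. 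This reordering recovers the full geometric decay in $k$ and yields $\alpha_n=\O(\log n)$, one logarithm sharper than the paper's bound. Your self-flagged concerns are both resolvable: the $\O(1/k)$ bias between $\tilde D^{(k)}$ and the true sample mean is handled exactly as you sketch, and the anti-concentration claim follows since, by the Kraft equality $\sum_i 2^{-h_t(i)}=1$, two distinct classification trees cannot have depth vectors that differ by a constant, so each $l_t-l_{t'}$ is a non-degenerate linear form on the simplex.
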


\begin{proof}
    The sum $\sum_{i=0}^K I^{(i)}_n\hopt(i)$ is the number of comparisons of elements with pivots if we were always using the optimal classification tree.
    % Define $\beta_k$ for $k>K$ as the number of comparisons of the $k$-th element with the pivots and $\widetilde \beta_k$ as the number of comparisons of the $k$-th element if it used the optimal partition tree. $\beta_k$ and $\widetilde \beta_k$ are bounded by $k$ and do not depend on $n$, since the partition tree is decided based on the elements before $k$.
    Condition on $D$. Let $t_k$ be the classification tree used for the $k$-th element and remember $\topt$ being the optimal classification tree. For every classification tree $t\ne \topt$, consider
    \begin{equation*}
        \psi_t(x_0, \dots, x_K) := \sum_{i=0}^K x_i (h_t(i) - \hopt(i)),
    \end{equation*}
    so that $\psi_t(D)$ is
    the expected number of additional comparisons sorting a single element using $t$ instead of $\topt$ conditional on $D$. By \eqref{def:pn}, using the law of total probability,
    \begin{align}
        \alphan &= \sum_{k=1}^{n-K} \sum_{i=0}^K 
        \E{
        \sum_{t\in\mathcal T}
            \indgt \eins_{\mg{a_k \in S_i}} h_t(i) - \eins_{\mg{a_k\in S_i}}\hopt(i)
        } + \E{R_n}
        \notag\\&= \sum_{k=1}^{n-K} 
        \sum_{t\in\mathcal T}
        \E{
            \indgt
            \sum_{i=0}^K
            \eins_{\mg{a_k \in S_i}} (h_t(i) - \hopt(i))
        } + \E{R_n}
        \notag\\&=\label{alpha-sum}
        \sum_{k=1}^{n-K} \sum_{t\in\mathcal T} \Ek[\big]{\Pk*{\gtk \given D} \psi_t(D)} + \O(1).
    \end{align}
    Because the algorithm always chooses the best classification tree, the algorithm chooses $\topt$ over a strategy $t$ if and only if $l_t(C^{(k)} + 1)$ is bigger than $l_{\topt}(C^{(k)} + 1)$ or if there is a tie that is resolved in favor of $\topt$. We can model this tiebreaking by adding either $\frac12$ or $-\frac12$ to the inequality, so that $\topt$ is chosen over $t$ if and only if $\psi_t\kl*{C^{(k)}} = l_t(C^{(k)})-l_{\topt}(C^{(k)}) > l_{\topt}(1) - l_t(1)\pm \frac12 =: b_t$. This $b_t$ is some constant in $\R\setminus \Z$.
    Furthermore, $\topt$ is optimal for $D$, so $\psi_t(D) ≥ 0$ for all $t\ne\topt$, and we can bound $\Pk*{\gtk}$ by $\Pk*{\psi_t\kl*{C^{(k)}}<b_t}$.

    For fixed $t$ and conditional on $D$, the variables $\psi_t(C^{(k)})$ are linear combinations of independent indicator variables with $\Ek{\psi_t(C^{(k)})\given D} = (k-1)\psi_t(D)$ and coefficients bound by $K$, and thus Hoeffding's inequality can be applied to yield
    \begin{align*}
        \Pk*{\psi_t\kl*{C^{(k)}} < b_t \given D} &≤ \exp\kl*{-\frac{2((k-1)\psi_t(D)-b_t)^2}{4K²(k-1)}}
        \\&= \exp\kl*{-\frac{k-1}{2K²}\psi_t^2(D) + \O\kl*{\psi_t(D) + (k-1)\inv}}.
    \end{align*}
    Taking the expectation over $D$,
    \[
        \Ek[\big]{\Pk*{\gtk \given D} \psi_t(D)}
        ≤ \Ek*{\psi_t(D)\exp\kl*{-\frac{k-1}{2K²}\psi_t^2(D) + \O\kl*{\psi_t(D) + (k-1)\inv}}}.
    \]
    Now decompose the expectation on whether $\psi_t^2(D)$ is smaller than ${4K²\ln k}/\kl{k-1}$ into
    \begin{align*}
        \Ek[\big]{\Pk*{\gtk \given D} \psi_t(D)} ≤ K\sqrt{\frac{4\ln k}{k-1}}\Pk*{\psi_t(D)\le K\sqrt{\frac{4\ln(k)}{k-1}}} + \exp\kl*{-2\ln k+\O(1)}.
    \end{align*} Since $\psi_t(D)$ is a linear combination of $D$ and $D$ has bounded density (on a simplex), $\psi_t(D)$ itself has bounded density (on $\topt\ne t$), so
    \begin{equation*}
        \Ek[\big]{\Pk*{\gtk \given D} \psi_t(D)}
        = \O\kl*{\frac{\log k}k} + \O\kl*{k^{-2}}
    \end{equation*}
    and, considering \eqref{alpha-sum}, $\alphan$ is of order $\O\kl*{(\log n)²}$.
\end{proof}

\begin{lem}\label{lem:opt-is-path-length}
    Define the sequence $(\Psi_n)_{n≥0}$ recursively as
    \begin{equation}\label{psi-recursion}
        \Psi_n := \Ek*{\sum_{i=0}^K \Psi_{I^{(i)}_n} + I^{(i)}_n\hopt(i)},
    \end{equation}
    with $\Psi_n := 0$ for $n≤ K$.
    Then, there is a constant $\widehat \beta_K\in \R$, such that
    \begin{equation*}
        %\Ek*{\sum_{\phi\in \paths} \sum_{i=0}^K \abs{T_n^{\phi i}}\hopt(i)}
        \Psi_n
        = \frac{\gamma_K}{H_{K+1}-1} n\ln n + \widehat \beta_K n + o(n).
    \end{equation*}
\end{lem}

\begin{proof}
    Assume $n>K$.
    Conditional on $D$, we have
    $
        \Ek*{I^{(i)}_n\given D} = \kl*{n - K} D_i,
    $
    so
    \begin{align*}
        \Ek*{\sum_{i=0}^K I^{(i)}_n\hopt(i)}
        &= \sum_{i=0}^K \Ek*{\Ek*{I^{(i)}_n\hopt(i) \given D}}
        %\nonumber \\&
        = \kl*{n - K}\sum_{i=0}^K \Ek*{ D_i\hopt(i)}
        %\nonumber \\&
        = \gamma_K \kl*{n - K}.
        \label{l2:pf:1}
    \end{align*}
    %Summing over $\Ek{\abs{T_n^\phi}}$ gives us
    Therefore, $\frac{\Psi_n}{\gamma_K}$ satisfies the recursion
    \begin{equation*}
        \frac{\Psi_n}{\gamma_K} = (n-K) + \E{\sum_{i=0}^K \frac{\Psi_{I^{(i)}_n}}{\gamma_K}},
    \end{equation*} which is the same recursion as for the expected internal path length of a $K+1$-ary tree. This expectation has been identified by Mahmoud \cite[middle of page 112]{mahmoud:internal-path-length}, where he explicitly derives
    \begin{equation*}
        \frac{\Psi_n}{\gamma_K} = \frac1{H_{K+1}-1} n\ln n + \frac{\widehat \beta_K}{\gamma_K} n + o(n)
    \end{equation*}
    for some explicitly given $\widehat \beta_K\in \R$, see also \cite{holmgren:path-length,mun11}.
    % The second term in \eqref{l2:pf:1} is just the number of keys in the root of $T_n$ and therefore sums up to $n$.
\end{proof}

\begin{pot1}
    Define $\Upsilon_n := \E{X_n} - \Psi_n$. Both $\E{X_n}$ with \eqref{x-recursion} and $\Psi_n$ with \eqref{psi-recursion} fulfill similar recursions, so remembering $\alphan$ from Lemma~\ref{lem:nah-an-opt} we obtain
    \begin{equation}\label{upsilon-eq}
        \Upsilon_n = \alphan + \E{\sum_{i=0}^K \Upsilon_{I^{(i)}_n}}.
    \end{equation}
    By Lemma \ref{lem:opt-is-path-length},
    \[
        \E{X_n} = \Psi_n + \Upsilon_n = \frac{\gamma_K}{H_{K+1}-1} n\ln n + \widehat \beta_K n + o(n) + \Upsilon_n,
    \]
    so it suffices to show that $\Upsilon_n \sim c_\alpha n$ for some $c_\alpha$.
    
    Recursions for $m$-ary search trees such as \eqref{upsilon-eq} with small $\alphan$ were studied by Chern and Hwang~\cite{hwang:mnary}. Instead of our probabilistic notation, they prefer to write \eqref{upsilon-eq} as a sum: Note that $I^{(i)}_n$ is distributed as $I^{(1)}_n$, and $I^{(1)}_n = j$ if and only if the $j+1$-th-smallest element is the smallest pivot. The other $K-1$ pivots can be chosen out of $n-1-j$ elements, so we obtain that for $j≤n-K$
    \begin{equation}
        \P{I^{(i)}_n=j} = \frac{1}{\binom{n}{K}}\binom{n-1-j}{K-1}
        \quad\text{ and thus }\quad \Upsilon_n = \alphan + \frac {K+1}{\binom nK}\sum_{j=0}^{n-K}\binom{n-1-j}{K-1}\Upsilon_j.
        \label{upsilon-eq-sum}
    \end{equation}
    In the notation of \cite{hwang:mnary}, the terms $m, t, \tau, b_n$
    correspond in our setting to $K+1,1,K, \alphan$, respectively, and so \eqref{upsilon-eq-sum} is a special case of the recursion (5) in \cite{hwang:mnary}($\tau$ is misspelled as $r$ there). In \cite[Proposition~7]{hwang:mnary}, it is shown for this recursion that
    \[
        \Upsilon_n \sim \frac{K_\alpha}{H_{K+1}-1}n,
    \]
    where
    \[
        K_\alpha := \sum_{j=0}^\infty \frac{\alpha_j}{(j+1)(j+2)},
    \]
    as long as $\alpha_n = o(n)$ and $\sum_{j} \alpha_j j^{-2} <\infty$, which we show in Lemma~\ref{lem:nah-an-opt}. See also Fill and Kapur \cite{fika05} for transfer theorems for $m$-ary search trees.
\end{pot1}

We finish this section with the proof of Theorem~\ref{prop:gamma-k}, which shows that the algorithm approaches the optimal first-order term for $K\to\infty$.

\begin{pot2}
%\newcommand\logk{d}
    % It is obvious that $\lopt(D)$ can only become worse when we include more pivots. For fixed $K$, let $\logk:=\ceil{\log_2(K)}$. We include all dyadic numbers $2^{-\logk}, 2\cdot 2^{-\logk},\dots, (2^{\logk}-1)2^{-\logk}$ as additional pivots and consider the partition strategy where we first compare with the dyadic numbers and then compare with the original pivots in an arbitrary order.
    
    % Because the dyadic numbers form a perfect binary tree, every element is compared with a dyadic number exactly $\logk$ times. In each of the $2^\logk$ intervals of length $2^{-\logk}$, there is an expected $2^{-\logk}K ≤ 1$ number of original pivots. In the worst case, an element is compared with every pivot in the interval. The expected number of comparisons is thus bounded by $\gamma_K ≤ \logk+2^{-\logk}K = \logk+1 ≤ \log_2(K) + 2$.
    When the classification tree is complete (that is, the height of the leaves differs by at most 1), the algorithm needs at most $\ceil{\log_2(K)}$ comparisons, so $\gamma_K ≤ \log_2(K) + 1$.
    Since the expected number of comparisons is bounded from below by the expected binary entropy
    \begin{equation*}
        \gamma_K ≥ \E{\sum_{i=0}^K D_i \log_2(D_i)} = \frac{H_{K+1}-1}{\ln(2)} \sim \log_2(K),
    \end{equation*}
    this bound is sharp.
    
\end{pot2}

\subsection{The contraction method}\label{sec:cm}
To show the convergence results, we will use the contraction method in the form of \cite{Wild2012}; for a more general introduction also see \cite{Roesler2001} and \cite{Neininger2001}. Let $\left(X_n\right)_{n \ge 0}$ denote a sequence of real-valued random variables satisfying the distributional recurrence
\begin{align*}
    X_n \overset{d}{=} \sum_{i=0}^K A_i^{(n)} X_{I_i^{(n)}}^{(i)}+b_n
\end{align*}
for $n \ge n_0$, where $\left(X_n^{(0)}\right)_{n \ge 0},\dots,\left(X_n^{(K)}\right)_{n \ge 0}$ and $\left(A_0^{(n)},\dots,A_K^{(n)},b_n\right)$ are independent and $X_j^{(i)}$ is distributed as $X_j$ for all $i=0,\dots,K$ and $j \ge 0$. The coefficients $A_i^{(n)}$ and $b_n$ are real random variables and $I^{(n)}=\left(I_0^{(n)},\dots,I_K^{(n)}\right)$ is a vector of random integers in $0,\dots,n-K$, while $K$ and $n_0$ are fixed numbers. Furthermore, we assume that the coefficients are square-integrable and the following conditions hold:
\begin{enumerate}
    \item[(A)] $\left(A_0^{(n)},\dots,A_K^{(n)},b_n \right) \overset{\ell_2}{\to} \left(A_0,\dots,A_K,b\right)$,
    \item[(B)] $\sum_{i=0}^K {\E{A_i^2}}<1$,
    \item[(C)] $\sum_{i=0}^K {\E{\eins_{\{I_i^{(n)}\le k\}\cup\{I_i^{(n)}=n\}} \bigl(b_i^{(n)}\bigr)^2}} \to 0$ as $n \to \infty$ for all constants $k \ge 0$.
\end{enumerate}
Then we have $X_n \overset{\ell_2}{\to} X$, where $X$ is the unique fixed point among all centered random variables with finite second moments of
\begin{align*}
    X \overset{d}{=} \sum_{i=0}^K A_i X^{(i)}+b,
\end{align*}
where $\left(A_0,\dots,A_K,b\right),X^{(0)},\dots,X^{(K)}$ are independent and $X^{(i)}$ is distributed as $X$ for $i=0,\dots,K$.
\subsection{Proof of Theorem~\ref{thm_1}} \label{sec:proof_thm1}
The normalized number of key comparisons $Y_n$ satisfy the recurrence
\begin{align*}
Y_n \eqdist \sum_{i=0}^{K} {\frac{I_i^{(n)}}{n}Y_{I_i^{(n)}}^{(i)}}+\frac{1}{n}\left(P_n-\mathbb{E}[X_n]+\sum_{i=0}^K{\mathbb{E}\left[X_{I_i^{(n)}}\mid I_i^{(n)}\right]}\right).
\end{align*}
To use the contraction method, we have to show that the following conditions hold: 
\begin{enumerate}[1)]
    \item $\frac{I_i^{(n)}}{n} \overset{L_2}{\longrightarrow} D_i$ for $i=0,\dots,K$,
    \item $\frac{P_n}{n} \overset{L_2}{\longrightarrow} \sum_{t \in \mathcal{T}} \eins_{\left\{D \in C_t\right\}} l_t(D)$,
    \item $\frac{1}{n}\left(\sum_{i=0}^K{\mathbb{E}\left[X_{I_i^{(n)}}\mid I_i^{(n)}\right]}-\mathbb{E}[X_n]\right) \overset{L_2}{\longrightarrow}
    \sum_{i=0}^K {\alpha_K D_i \ln(D_i)}$,
    \item $\sum_{i=0}^{K} {\mathbb{E}\left[D_i^2\right]} < 1$,
    \item $\E{\eins_{\left\{I_i^{(n)} \le k\right\}\cup \left\{I_i^{(n)}=n\right\}} \left(\frac{I_i^{(n)}}{n}\right)^2 } \overset{n \to \infty}{\longrightarrow} 0$ for all $k \in \mathbb{N}$ and $i=0,\dots,K$.
\end{enumerate}
Given $D=(d_0,\dots,d_K)$, $I_i^{(n)}$ is multinomially $M(n-K;d_0,\dots,d_K)$ distributed. The strong law of large numbers gives us the almost sure convergence of $\frac{I_i^{(n)}}{n}$ towards $D_i$, and the dominated convergence theorem yields the convergence in $L_2$. Along these lines, the fifth condition also follows. We will now show condition 2).
\begin{lem}
We have
\[
\frac{P_n}{n} \overset{L_2}{\longrightarrow} \sum_{t \in \mathcal{T}} {\eins_{\left\{D \in C_t\right\}}l_t(D)}.
\]
\end{lem}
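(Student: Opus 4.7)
The plan is to decompose $P_n$ into a conditional martingale part and a drift part, and to apply \Cref{lem:nah-an-opt} to control the drift. From \eqref{def:pn} write $P_n = \sum_{k=1}^{n-K} Y_k + R_K$, where $Y_k := \sum_{t\in\mathcal T}\sum_{i=0}^K \eins_{\{a_k\in G_t\cap S_i\}} h_t(i)\in[0,K]$ is the number of comparisons used to classify the $k$-th element, and let $t_k\in\mathcal T$ denote the classification tree selected for it. Since $t_k$ is a deterministic function of $D$ and of $a_1,\dots,a_{k-1}$, while, conditionally on the pivots, $a_k$ falls into $S_i$ with probability $D_i$ independently of the history, we have
\begin{equation*}
\Ek*{Y_k \given D, a_1,\dots,a_{k-1}} = \sum_{i=0}^K D_i\, h_{t_k}(i) = l_{t_k}(D).
\end{equation*}

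The first step is to handle the conditional martingale $M_n:=\sum_{k=1}^{n-K}(Y_k-l_{t_k}(D))$: given $D$ it has increments bounded by $K$ in absolute value, so orthogonality of martingale differences gives $\Ek{M_n^2 \given D}\le K^2 n$, and integrating over $D$ and dividing by $n^2$ yields $M_n/n\to 0$ in $L_2$.

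The second step is to show that $\frac1n\sum_{k=1}^{n-K} l_{t_k}(D)\to l_{\topt}(D)$ in $L_2$. Writing $\Delta_k := l_{t_k}(D)-l_{\topt}(D)\in[0,K]$, the tower property combined with \eqref{l2:pf:1} gives
\begin{equation*}
\sum_{k=1}^{n-K}\Ek{\Delta_k} = \Ek{P_n}-\Ek{R_K}-(n-K)\gamma_K = \alpha_n + \O(1) = \O\kl*{(\log n)^2}
\end{equation*}
by \Cref{lem:nah-an-opt}. Since each $\Delta_k\in[0,K]$, the elementary inequality $(\frac1n\sum_k\Delta_k)^2\le K\cdot\frac1n\sum_k\Delta_k$ upgrades this mean bound to $L_2$ control:
\begin{equation*}
\Ek*{\kl*{\frac1n\sum_{k=1}^{n-K}\Delta_k}^2} \le K\,\Ek*{\frac1n\sum_{k=1}^{n-K}\Delta_k} = \O\kl*{(\log n)^2/n}\longrightarrow 0.
\end{equation*}

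Combining the two steps yields $P_n/n\to l_{\topt}(D)$ in $L_2$. Finally, since $D$ has a density on the simplex and the boundaries between the cones $C_t$ are contained in a finite union of hyperplanes, $D$ almost surely lies in the interior of a unique cone, so $l_{\topt}(D) = \sum_{t\in\mathcal T}\eins_{\{D\in C_t\}}l_t(D)$ almost surely, which is the asserted limit. The only substantive input is the sub-polynomial drift bound of \Cref{lem:nah-an-opt}; once that is in hand, the remainder is a routine bounded-martingale computation, so I do not foresee a genuine obstacle.
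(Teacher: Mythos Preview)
Your proof is correct and takes a genuinely different route from the paper's. The paper shows directly that each $A_{t,j}^{(n)}/n:=\frac{1}{n}\sum_{k}\eins_{\{a_k\in G_t\cap S_j\}}$ converges in $L_2$ to $\eins_{\{D\in C_t\}}D_j$: it introduces auxiliary random walks (illustrated for $K=3$, $t=t_1$) whose sign encodes which classification tree the algorithm prefers, applies the strong law of large numbers to show that after some random $n_0$ the optimal tree is always chosen, and then squeezes $A_{t,j}^{(n)}$ between $I_j^{(n)}-n_0$ and $I_j^{(n)}$. Your argument instead decomposes $P_n$ into a bounded-increment martingale plus a nonnegative drift; orthogonality disposes of the martingale, and the drift is killed by the first-moment bound of \Cref{lem:nah-an-opt} combined with the elementary $X^2\le KX$ for $X\in[0,K]$. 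This is shorter and uniform in $K$, avoiding any tree-by-tree case analysis, but it relies on \Cref{lem:nah-an-opt}, whereas the paper's proof of the present lemma is self-contained and re-uses the random-walk viewpoint that later feeds into the rate-of-convergence estimates. Both approaches are valid; yours trades explicit combinatorics for a clean reduction to an estimate already on the shelf.
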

\begin{proof}
Recall that $a_1,\dots,a_n$ are the elements of the unsorted list, $\gtk$ is the event that $a_k$ is sorted with the classification tree $t \in \mathcal T$ and $\left\{a_k \in S_j\right\}$ denotes that $a_k$ is sorted into sublist $S_j$. We now define for $t \in \mathcal{T}$ and $0 \le j \le K$ the random variable
\[
A_{t,j}^{(n)}:=\sum_{k=1}^{n-K} \indgt \eins_{\mg{a_k \in S_j}}.
\]
We claim $\frac{A_{t,j}^{(n)}}{n} \overset{L_2}{\longrightarrow} \eins_{\left\{D \in C_t\right\}} D_j$. In this paper, this is shown exemplarily for $K=3$ and $t=t_1$, but it works analogously for other trees, since a tree $t \in \mathcal{T}$ is chosen as the classification tree if and only if a set of inequalities in $\left(x_0,\dots,x_K\right)$ is fulfilled.%, as every asymptotic cone $C_t$ can be seen as a polyhedron.

To show that, we define random walks $W_1=\left(W_{1,i}\right)_{i \ge 0}$ and $W_2=\left(W_{2,i}\right)_{i \ge 0}$ 
%through $W_{1,0}=W_{2,0}:=0$ and
by
\begin{align*}
    W_{1,i}&:=\sum_{m=1}^{i} {\eins_{\left\{a_m \in S_1\right\}}-\eins_{\left\{a_m \in S_3\right\}}},
\\
    W_{2,i}&:=\sum_{m=1}^{i} {\eins_{\left\{a_m \in S_0\right\}}-\eins_{\left\{a_m \in S_2\right\}}-\eins_{\left\{a_m \in S_3\right\}}}.
\end{align*}
If $W_1$ is nonnegative, the algorithm chooses $t_1$ over $t_2$ and if $W_2$ is positive, it chooses $t_1$ over $t_3$.

Conditionally on $D=(d_0,\dots,d_3)$, the processes $W_1$ and $W_2$ are two simple walks on $\mathbb{Z}$ with constant probabilities to go one step up, one step down, or stay in the actual state. If $d_1>d_3$ and $d_0>d_2+d_3$, $W_1$ and $W_2$ tend to infinity by the strong law of large numbers. Thus, there exists a random $n_0 \in \mathbb{N}$ such that both random walks are positive for every $i \ge n_0$, so the random walk $W:=\min\{W_1,W_2\}$ is also positive for every $i \ge n_0$. This implies that starting from index $n_0$, the classification tree $t_1$ is always used. With $\left\vert S_j \right\vert = I_j^{(n)}$ we get
\[
\frac{I_j^{(n)}-n_0}{n} \le \frac{A_{t_1,j}^{(n)}}{n} \le \frac{I_j^{(n)}}{n}
\]
and therefore on $\left\{D_1>D_3,D_0>D_2+D_3\right\} = \mg{D\in C_{t_1}}$  we have $\frac{A_{t_1,j}^{(n)}}{n} \to D_j$ almost surely. Similarly, we can conclude $\frac{A_{t_1,j}^{(n)}}{n} \to 0$ almost surely on the complement, because $W_1$ or $W_2$ tends to $-\infty$ almost surely. Using the dominated convergence theorem, we find
\[
\frac{A_{t_1,j}^{(n)}}{n} \overset{L_2}{\longrightarrow} \eins_{\left\{D \in C_{t_1}\right\}} D_j.
\]
We now use the fact that
\[
P_n=\sum_{t \in \mathcal{T}} \sum_{j=0}^{K} {A_{t,j}^{(n)}h_j(t)}+R_K
\]
with $R_K=\O(1)$.
% We get
% \begin{equation}
%     \begin{split}
%         \left\Vert \frac{P_n}{n}-\sum_{t \in T} \eins_{\left\{D \in C_t\right\}}l_t(D) \right\Vert_2 & = \left\Vert \sum_{t \in T} \sum_{j=0}^{K} \frac{A_{t,j}^{(n)}}{n}h_j(t)-\frac{R_n}{n}-\sum_{t \in T} \textbf{1}_{\left\{D \in C_t\right\}}l_t(D) \right\Vert_2\\
%         & \le \sum_{t \in T}\left\Vert \sum_{j=0}^{K} \frac{A_{t,j}^{(n)}}{n}h_j(t)- \eins_{\left\{D \in C_t\right\}}l_t(D) \right\Vert_2+o(1)\\
%         & \le \sum_{t \in T} \sum_{j=0}^K \left\Vert h_j(t) \left (\frac{A_{t,j}^{(n)}}{n}-\eins_{\left\{D \in C_t\right\}}D_j\right)\right\Vert_2+o(1) \\
%         & =\sum_{t \in T} \sum_{j=0}^K \left\vert h_j(t) \right\vert \left\Vert\left (\frac{A_{t,j}^{(n)}}{n}-\eins_{\left\{D \in C_t\right\}}D_j\right)\right\Vert_2+o(1)\\
%         & \to 0 \quad (n \to \infty),
%     \end{split}
% \end{equation}
% which finishes the proof.
So
\begin{equation*}
   \frac{P_n}{n} \convl2 \sum_{t\in\mathcal T}\sum_{j=0}^K \eins_{\mg{D\in C_t}} D_j h_j(t) = \sum_{t\in\mathcal T}\eins_{\mg{D\in C_t}} l_t(D),
\end{equation*}
which concludes the proof.
\end{proof}

With the expansion $\E{X_n} = \alpha_K n\ln n + \beta_K n + o(n)$ standard calculations imply
\begin{equation*}
    \begin{split}
    \frac{1}{n} \left(\sum_{i=0}^{K} {\mathbb{E}\left[X_{I_i^{(n)}}\mid I_i^{(n)}\right]}-\mathbb{E}[X_n]\right) = \alpha_K \sum_{i=0}^K \frac{I_i^{(n)}}{n}\ln\frac{I_i^{(n)}}{n}+o(1).
\end{split}
\end{equation*}
The continuous mapping theorem now yields $\frac{I_i^{(n)}}{n}\ln\frac{I_i^{(n)}}{n} \to D_i \ln(D_i)$ almost surely and with the dominated convergence theorem also in $L_2$. This proves condition 3).

The spacings $D_i$, $i=0,\dots,K$, are identically beta$(1,K)$-distributed since $D_0$ is the minimum of $K$ independent, uniformly on $[0,1]$ distributed random variables. Therefore, condition 4) also holds and the first part of Theorem~\ref{thm_1} follows with the contraction method. 

For the second part of \Cref{thm_1} we use the following proposition, which is a straightforward extension of an argument of R\"osler \cite[Section 4]{Roesler1991}; see also Fill and Janson \cite{Fill2002} for a quantified extension of \cite[Section 4]{Roesler1991} and \cite[Lemma 4.3]{neru99}.
\begin{prop}\label{prop_1}
    If conditions (A)-(C) in Section 2.2 as well as 
    \begin{enumerate}
        \item[(a)] $\sup_{n \in \mathbb{N}} \left\Vert b_n \right\Vert_{\infty}<\infty$,
        \item[(b)] $\forall n \in \mathbb{N}\colon \sum_{i=0}^K {\left(A_i^{(n)}\right)^2}<1$
    \end{enumerate}
    hold, we have for all $\lambda \in \mathbb{R}$
    \[
    \mathbb{E}\left[\exp(\lambda X_n)\right] \to \mathbb{E}\left[\exp(\lambda X)\right]<\infty.
    \]
\end{prop}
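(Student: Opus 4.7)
The plan is to argue in two steps. Step A establishes the uniform exponential moment bound
\[
C(\lambda):=\sup_{n\ge 0} M_n(\lambda)<\infty\quad\text{for every }\lambda\in\R,
\]
with $M_n(\lambda):=\Ek{e^{\lambda X_n}}$. Step B deduces MGF convergence from Step A together with the $L_2$ convergence $X_n\to X$ provided by the contraction method of Section \ref{sec:cm}, via uniform integrability.

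For Step A I would condition on $\mathcal{G}_n:=\sigma(A_0^{(n)},\dots,A_K^{(n)},I_0^{(n)},\dots,I_K^{(n)},b_n)$ and use the independence of the recursive copies $X_k^{(i)}$ from $\mathcal{G}_n$ to rewrite the distributional recursion as
\[
M_n(\lambda)=\Ek*{e^{\lambda b_n}\prod_{i=0}^K M_{I_i^{(n)}}(\lambda A_i^{(n)})}.
\]
Next, I would proceed by strong induction on $n$ with an ansatz $M_n(\lambda)\le \exp(H(|\lambda|))$ for an even, convex, increasing function $H\colon\R_{\ge 0}\to\R_{\ge 0}$ with $H(0)=0$, to be determined. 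Writing $B:=\sup_n\lnorm{\infty}{b_n}$, condition (a) gives $e^{\lambda b_n}\le e^{B|\lambda|}$ and condition (b) gives $|A_i^{(n)}|<1$ a.s., so the inductive step amounts to verifying
\[
B|\lambda|+\sum_{i=0}^K H(|\lambda A_i^{(n)}|)\le H(|\lambda|)\quad\text{a.s.}
\]
A sub-Gaussian ansatz $H(u)=cu^2$ absorbs the sum via $\sum_i (A_i^{(n)})^2<1$ from (b); the linear perturbation $B|\lambda|$ is handled by first iterating the recursion finitely many times, so that the $L_2$-contraction $\sum_i\Ek{A_i^2}<1$ from (B) yields a uniform strict gap on the iterated coefficients. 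The base case is taken care of by the deterministic finiteness of $M_n(\lambda)$ on the initial segment $n\le n_0$ of the recursion, absorbed into the constants of $H$.

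Step B is then standard: the bound from Step A applied at $(1+\eps)|\lambda|$ implies uniform integrability of $\{e^{\lambda X_n}\}_{n\ge 0}$ for every $\lambda\in\R$; combined with the distributional convergence $X_n\to X$ inherited from $L_2$ convergence, this gives $M_n(\lambda)\to \Ek{e^{\lambda X}}$, and Fatou's lemma bounds the limit by $C(\lambda)<\infty$.

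The main obstacle is closing the inductive step in Step A. Condition (b) is only strict almost surely and does not supply a uniform gap from $1$, so a direct quadratic closure can fail in the pathological regime where $\sum_i(A_i^{(n)})^2$ is close to $1$ while simultaneously $|b_n|$ saturates the bound $B$. The remedy of iterating the recursion before closing the induction---using the genuinely contractive $\sum_i\Ek{A_i^2}<1$ from (B) to dominate the pathological atoms---makes the argument go through, at the cost of constants in $H$ that depend on the iteration depth.
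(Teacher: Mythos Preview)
Your two-step architecture matches the paper: first establish a uniform bound $\sup_n \Ek{e^{\lambda X_n}} < \infty$ by induction on $n$, then deduce convergence via uniform integrability. Step~B is fine and is exactly how the paper concludes.

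The gap is in Step~A. You try to close the inductive inequality \emph{almost surely}, requiring
\[
B|\lambda|+c\lambda^2\sum_{i=0}^K (A_i^{(n)})^2 \le c\lambda^2
\]
pointwise on the coefficient vector. This cannot hold for small $|\lambda|$: the left side carries a first-order term $B|\lambda|$ while the available slack on the right, $c\lambda^2\bigl(1-\sum_i(A_i^{(n)})^2\bigr)$, is second-order in $\lambda$. No amount of iteration repairs this; even if you could engineer a uniform gap $1-\sum_i(A_i^{(n)})^2\ge\delta>0$, the inequality still fails on a neighbourhood of $0$. The real loss is that you replaced $e^{\lambda b_n}$ by the crude bound $e^{B|\lambda|}$ \emph{before} taking the expectation, discarding the centering $\Ek{b_n}=0$ that the normalized recursion provides.

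The paper (following R\"osler) closes the induction \emph{in expectation}. After inserting the inductive hypothesis into the inner MGFs one gets
\[
M_n(\lambda)\le e^{M\lambda^2}\,\Ek*{e^{\lambda b_n+M\lambda^2 U_n}}=: e^{M\lambda^2}\, f(M,\lambda,n),
\qquad U_n:=\sum_{i=0}^K(A_i^{(n)})^2-1<0,
\]
so it suffices to find, for each $L$, an $M=M_L$ with $f(M_L,\lambda,n)\le 1$ for all $|\lambda|\le L$ and all $n$. Near $\lambda=0$ this is a second-order Taylor calculation: $f(M,0,n)=1$, the first derivative at $0$ equals $\Ek{b_n}=0$, and the second derivative at $0$ is $\Ek{b_n^2}+2M\,\Ek{U_n}$, which can be made uniformly negative by choosing $M$ large since conditions~(A) and~(B) give $\sup_n\Ek{U_n}<0$ while~(a) gives $\sup_n\Ek{b_n^2}\le B^2$. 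Away from $0$ one uses $U_n<0$ a.s.\ together with~(a) to see that $f(M,\lambda,n)\to 0$ as $M\to\infty$, and monotonicity of $f$ in $M$ then allows a single $M_L$ to work on all of $[-L,L]$. The key structural point you are missing is that the cancellation for small $\lambda$ has to happen \emph{under} the expectation, via $\Ek{b_n}=0$; it is not available pathwise.
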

The second part of \Cref{thm_1} follows from Proposition \ref{prop_1}; cf.~also \cite[Theorem 5.1]{neru99}. Note that the argument there also implies the uniform integrability of the $Y_n$. Convergence of the Laplace transform and uniform integrability imply the convergence of all moments claimed in \Cref{thm_1}.
Since the convergence holds with all moments, we directly get $\Var(X_n) \sim \sigma_K^2n$ for $n \to \infty$, where $\sigma_K^2=\E{Z_K^2}$. Squaring \eqref{fixpoint} and using $\E{Z_K}=0$ gives us 
\begin{align*}
    \E{Z_K^2}=&\E{\sum_{i=0}^K \left(D_i Z_K^{(i)}\right)^2}+\E{\left(\alpha_K\sum_{i=0}^K D_i \ln(D_i)+\lopt(D)\right)^2}\\
    &+2\E{\left(\sum_{i=0}^K D_i Z_K^{(i)}\right)\left(\alpha_K\sum_{i=0}^K D_i \ln(D_i)+\lopt(D)\right)}.
\end{align*}
The last summand vanishes since $D$ and $Z_K^{(i)}$ are independent. Using once again independence of $D$ and $Z_K^{(i)}$ for the first term yields us
\begin{align*}
    \left(1-\sum_{i=0}^K \E{D_i^2}\right)\E{Z_K^2}=\E{\left(\alpha_K\sum_{i=0}^K D_i \ln(D_i)+\lopt(D)\right)^2}.
\end{align*}
Theorem~\ref{thm:variance} follows with $\sum_{i=0}^K \E{D_i^2}=\frac{2}{K+2}$.
\subsection{Proof of Theorem~\ref{thm:smooth}} \label{sec:smooth}
Leckey \cite{leckey19}, building on \cite{fija00}, shows that $Y$ satisfying the recurrence $Y \overset{d}{=} \sum_{i=0}^{\infty} A_i Y^{(i)}+b$ has a smooth and bounded density function if the following conditions hold, where $\alpha^{\max}$ is the largest element and $\asec$ the second largest element in $\left(A_i\right)_{i \ge 0}$:
\begin{enumerate}
\item There exists a constant $a>0$, such that $\mathbb{P}\left(\alpha^{\max} \ge a\right)=1$,
\item there are constants $\tau$, $\nu>0$, such that $
\P{\asec \le x } ≤ \tau x^{\nu}$  for all $x>0$,
\item $\mathbb{P}\left(A_i \le 1 \right)=1$,
\item there does not exist a $c \in \mathbb{R}$ such that $\mathbb{P}\left(Y=c\right)=1$,
\item $\mathbb{P}\left(\sum_{i=0}^{\infty}{\eins_{\left\{A_i \in (0,1)\right\}} \ge 1}\right)>0$.
\end{enumerate}
In our case, $A_i=D_i$ for $i=0,\dots,K$ and $A_i=0$ for $i>K$. If we choose $a:=1/(K+1)$, condition 1 holds. Conditions 3 and 5 hold since $D_i \in (0,1)$ almost surely. Because $Y$ has a positive variance by \Cref{thm:variance}, there could not be a $c \in \mathbb{R}$ with $Y=c$ almost surely. For condition 2, we need a little calculation:\\
Since $D_0+\dots+D_K=1$, for $x \in \left(0,1/K\right)$ we have
\begin{equation*}
    \begin{split}
    \P{\asec ≤ x} & \le \P{\max\mg*{D_0,\dots,D_K} \ge 1-K \cdot x}\\
    & = \mathbb{P}\left(\bigcup_{i=0}^{K} \left\{D_i\ge 1-K \cdot x\right\}\right)\\
    & \le (K+1)\mathbb{P}\left(D_0 \ge 1-K \cdot x\right)\\
    & = (K+1) \mathbb{P}\left(\min\left\{U_1,\dots,U_K\right\} \ge 1-K \cdot x \right)\\
    & = (K+1) \left(K \cdot x \right)^K=(K+1)K^Kx^K.
\end{split}
\end{equation*}
If we choose $\tau:=\left(K+1\right)K^K$ and $\nu=K$, condition 2 holds for $x\in \left(0,\frac{1}{K}\right)$. The function $g(x):=(K+1)K^Kx^K$ fulfills $g\left(1/K\right)=K+1>1$ and increases monotonically on $(0,1)$. Therefore, condition 2 also holds for $x\ge 1/K$.

\subsection{Rate of Convergence} \label{sec:rate}
In the present section, we are proving Theorem~\ref{thm:speed}. We start with the bounds of the speed of convergence in the $\ell_p$ metrics. For later use, we have the following technical result.
\begin{lem}\label{tech}
    For $K \in \mathbb{N}$ and all $\varepsilon>0$, there exists a $\xi>1$ such that
    \[
    \sum_{j=0}^{n-K} \frac{\left(n-j-1 \right)!}{\left(n-K-j \right)!}j^{1+\varepsilon} \le \frac{n^{\varepsilon}}{\xi} \frac{n!}{K(K+1)(n-K-1)!}.
    \]
\end{lem}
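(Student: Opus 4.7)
The plan is to recognize the left-hand side as a Riemann-type sum for a Beta integral and to exploit the key fact that $K(K+1)\,B(K,2+\varepsilon)<1$ strictly whenever $\varepsilon>0$, so that the leading-order constants leave a positive gap.

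First, I would strip the factorials by the elementary bound
$$\frac{(n-j-1)!}{(n-K-j)!} = (n-j-1)(n-j-2)\cdots(n-K-j+1) \le (n-j)^{K-1},$$
and, on the right, use $\frac{n!}{(n-K-1)!}=n(n-1)\cdots(n-K)=n^{K+1}(1+\mathrm{O}(1/n))$. After dividing by $n^{K+1+\varepsilon}$ the claim reduces to showing
$$\frac{1}{n}\sum_{j=0}^{n-K} f(j/n) \le \frac{1}{K(K+1)\,\xi'}(1+o(1)), \qquad f(x):=(1-x)^{K-1}x^{1+\varepsilon},$$
for some $\xi'>1$, after which the $(1+o(1))$ correction can be absorbed into a slightly smaller $\xi>1$ for all $n$ large enough.

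Second, since $f$ is bounded and (H\"older-)continuous on $[0,1]$, the Riemann sum on the left converges to $\int_0^1 f(x)\,dx = B(K,2+\varepsilon)$ with an error that is $\mathrm{O}(n^{-\min(1,\varepsilon)})$ via the usual comparison with piecewise-constant majorants. The crucial input is the closed-form
$$K(K+1)\,B(K,2+\varepsilon) = \frac{K(K+1)\,\Gamma(K)\,\Gamma(2+\varepsilon)}{\Gamma(K+2+\varepsilon)} = \frac{(K+1)!}{(2+\varepsilon)(3+\varepsilon)\cdots(K+1+\varepsilon)}.$$
At $\varepsilon=0$ this equals $1$ exactly (which is why the leading-order cancellation works at all), while for $\varepsilon>0$ every factor in the denominator strictly increases, so the product is strictly less than $1$. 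Setting $c_\varepsilon:=K(K+1)\,B(K,2+\varepsilon)\in(0,1)$, one obtains $\limsup_n \frac{\mathrm{LHS}}{n^\varepsilon \cdot T_n} \le c_\varepsilon$, where $T_n:=\frac{n!}{K(K+1)(n-K-1)!}$. Any $\xi\in(1,1/c_\varepsilon)$ then works for all $n$ beyond some $n_0$.

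Third, to cover the remaining finitely many small values $K+1\le n< n_0$, I would simply check that the ratio $\mathrm{LHS}/(n^\varepsilon T_n)$ is strictly less than $1$ for each such $n$ (for $n=K$ both sides are comparable or the LHS vanishes since $0^{1+\varepsilon}=0$), and shrink $\xi$ if necessary to the minimum over this finite set, preserving $\xi>1$.

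The main obstacle is not the asymptotic analysis, which is clean, but verifying that the strict inequality holds uniformly in $n$: the $(1+\mathrm{O}(1/n))$ correction from $n(n-1)\cdots(n-K)$ versus $n^{K+1}$ is of the \emph{wrong sign} (it reduces the right-hand side), so one has to be careful that the gap $1-c_\varepsilon$ dominates all such lower-order corrections. I expect the cleanest way to keep track of signs is to work throughout with the exact expression $T_n=n(n-1)\cdots(n-K)/(K(K+1))$ rather than its leading term, and to bound the Riemann-sum error by an explicit constant times $n^{-\min(1,\varepsilon)}$ derived from the modulus of continuity of $f$ on $[0,1]$.
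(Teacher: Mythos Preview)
Your approach is sound and genuinely different from the paper's. The paper argues by splitting the sum at $j=\lfloor n/2\rfloor$: for $j\le n/2$ it bounds $j^{1+\varepsilon}\le (n/2)^\varepsilon j$, for $j>n/2$ it bounds $j^{1+\varepsilon}\le n^\varepsilon j$, and then observes (via a short case distinction $K=1$ versus $K\ge 2$) that the lower half carries at least a fixed fraction $1/5$ of the total weight in the $\varepsilon=0$ sum. Combined with the closed-form identity $\sum_{j=0}^{n-K}\frac{(n-j-1)!}{(n-K-j)!}\,j=\frac{n!}{K(K+1)(n-K-1)!}$, this yields the explicit constant $\xi=\bigl(\tfrac{1}{5\cdot 2^\varepsilon}+\tfrac{4}{5}\bigr)^{-1}>1$ uniformly in $n$, with no separate treatment of small $n$ needed.

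Your Beta-integral route is more conceptual: it explains \emph{why} there is slack, namely because $K(K+1)\,B(K,2+\varepsilon)$ equals $1$ at $\varepsilon=0$ and is strictly decreasing in $\varepsilon$, and it delivers the asymptotically sharp constant $c_\varepsilon$ rather than the cruder one above. The one soft spot is your treatment of small $n$: ``simply check that the ratio is strictly less than $1$'' is not really a method, since $n_0$ is not explicit and depends on $\varepsilon$. The clean fix is to use the very identity the paper relies on: writing $w_j=\frac{(n-j-1)!}{(n-K-j)!}$, one has $n^\varepsilon T_n=\sum_j w_j\,j\cdot n^\varepsilon$, so the ratio equals $\sum_j w_j\,j\,(j/n)^\varepsilon\big/\sum_j w_j\,j$, which is strictly less than $1$ for every $n\ge K+1$ because each $j\le n-K<n$. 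Taking $1/\xi$ to be the maximum of this finite set and, say, $(1+c_\varepsilon)/2$ then closes the argument. Note also that your integrand $f(x)=(1-x)^{K-1}x^{1+\varepsilon}$ is in fact Lipschitz on $[0,1]$ (its derivative is bounded since $1+\varepsilon>1$), so the Riemann-sum error is $\mathrm{O}(1/n)$ rather than $\mathrm{O}(n^{-\min(1,\varepsilon)})$; this only makes your estimate cleaner.
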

\begin{proof}
    For a fixed $\varepsilon>0$ we bound
    \begin{equation*}
        \begin{split}
            \sum_{j=0}^{n-K} \frac{\left(n-j-1 \right)!}{\left(n-K-j \right)!}j^{1+\varepsilon} & = \sum_{j=0}^{\left\lfloor \frac{n}{2} \right\rfloor} \frac{\left(n-j-1 \right)!}{\left(n-K-j \right)!}j^{1+\varepsilon} +\sum_{j=\left\lfloor \frac{n}{2} \right\rfloor+1}^{n-K} \frac{\left(n-j-1 \right)!}{\left(n-K-j \right)!}j^{1+\varepsilon}\\
            & \le  n^\varepsilon \left(\frac{1}{2^\varepsilon}\sum_{j=0}^{\left\lfloor \frac{n}{2} \right\rfloor} \frac{\left(n-j-1 \right)!}{\left(n-K-j \right)!}j +\sum_{j=\left\lfloor \frac{n}{2} \right\rfloor+1}^{n-K} \frac{\left(n-j-1 \right)!}{\left(n-K-j \right)!}j \right).
        \end{split}
    \end{equation*}
    The first summand in the latter display contributes at least $\frac{1}{5}$ of the initial sum, while the second part is smaller than $\frac{4}{5}$ of the initial sum.\\
    (The case $K=1$ follows from the Gaussian sum formula, while in the case $K \ge 2$ the last term of the second sum is smaller than the first term of the first sum etc.).\\
    Therefore we set $\frac{1}{\xi}:=\frac{1}{5\cdot 2^{\varepsilon}}+\frac{4}{5}<1$ and  with $x:= \sum_{j=0}^{\left\lfloor \frac{n}{2} \right\rfloor} \frac{(n-j-1)!}{(n-K-j)!}j$, $y:=\sum_{j=\left\lfloor \frac{n}{2} \right\rfloor+1}^{n-K} \frac{\left(n-j-1 \right)!}{\left(n-K-j \right)!}j$ and some $\eta \ge 0$ we obtain
    \begin{equation*}
        \begin{split}
        \frac{1}{2^{\varepsilon}}x+y & = \frac{1}{2^\varepsilon}\left(\frac{1}{5}(x+y)+\eta \right)+\frac{4}{5}(x+y)-\eta\\
        & \le \left(\frac{1}{5\cdot 2^\varepsilon}+\frac{4}{5} \right)(x+y)\\
        & = \frac{1}{\xi}(x+y).
        \end{split}
    \end{equation*}
The statement now follows with $\sum_{j=0}^{n-K} \frac{(n-j-1)!}{(n-K-j)!}j = \frac{n!}{K(K+1)(n-K-1)!}$.
\end{proof}

For bounding $\ell_p$ distances note that it is possible to define random variables $Y,(Y_n)_{n \ge 1}$ on a common probability space, the so-called optimal couplings, such that
$$\ell_p\left(Y_n,Y\right):=\ell_p\left(\mathcal{L}(Y_n),\mathcal{L}(Y)\right)=\left\Vert Y_n-Y \right\Vert_p.$$ Therefore, for fixed $2 \le K \le 4$, we can define $\kl[\big]{Z_K,\left(Y_n\right)_{n \ge 0}}$ such that 
$$\Delta(n):=\ell_2\left(Y_n,Z_K\right)=\left\Vert Y_n-Z_K \right\Vert_2.$$ 
They are also optimal $\ell_p$-couplings for every $p \ge 3$, see, e.g.,  \cite{vi09}.  Furthermore, we choose $\kl[\big]{Z_K^{(i)},\kl[\big]{Y_n^{(i)}}_{n \ge 0}}$ as independent copies of $\kl[\big]{Z_K,\left(Y_n\right)_{n \ge 0}}$. With the distributional recurrences for $Z_K$ and $Y_n$ we get
\begin{align}{\label{W_i}}
    \begin{split}
       \Delta^2(n) & \le \mathbb{E}\left[\left\vert \sum_{i=0}^K \frac{I_i^{(n)}}{n} Y_{I_i^{(n)}}^{(i)}-D_i Z_K^{(i)} + b_n - b \right\vert^2\right]\\
       & =: \mathbb{E}\left[\left\vert \sum_{i=0}^K W_i + W_{K+1} \right\vert^2\right].
    \end{split}
\end{align}
Conditionally on $I^{(n)}$ and $D$, the terms $W_0,\dots,W_{K+1}$ are independent. Furthermore, we have $\mathbb{E}[W_i]=0$ for $i=0,\dots,K$ and therefore
\begin{align*}
    \mathbb{E}\Bigg[\Bigg(\sum_{i=0}^{K+1} W_i \Bigg)^2 \Bigg|\left(I^{(n)},D \right)\Bigg] = \sum_{i=0}^{K+1} \Ek*{W_i^2 \given \kl[\big]{I^{(n)},D }}
\end{align*}
and with \eqref{W_i} we obtain
\begin{align}\label{Delta_est}
    \Delta^2(n) \le \sum_{i=0}^{K+1} \mathbb{E}\left[W_i^2 \right]=\sum_{i=0}^K \mathbb{E} \Bigg[\Bigg(\frac{I_i^{(n)}}{n} Y_{I_i^{(n)}}^{(i)}-D_i Z_K^{(i)} \Bigg)^2\Bigg] + \mathbb{E}\left[\left(b_n-b \right)^2\right].
\end{align}

Our basic strategy to obtain bounds on $\ell_p$ distances for all $p\ge 2$ is as in \cite{Fill2002} for the case $K=1$, i.e., we argue with induction over $p$ and start with the base case $p=2$.

First, we bound the toll term $b_n-b$ which requires more effort and leads to different bounds in Lemma \ref{toll} compared to the case $K=1$. Recall that
\begin{align*}
b_n&=\frac{1}{n}\left(P_n-\mathbb{E}[X_n]+\sum_{i=0}^K {\mathbb{E}\Big[X_{I_i^{(n)}}\Big| I_i^{(n)}\Big]}\right),\\
b&=\alpha_K \sum_{i=0}^K D_i \ln(D_i) + \sum_{t \in T} \eins_{\left\{D \in C_t\right\}} l_t(D).
\end{align*}
\begin{lem}\label{toll}
    For all $p \ge 1$ and $1 \le K \le 4$ we have
    \begin{align*}
        \left\Vert b_n-b \right\Vert_p = \O\left(\frac{1}{\sqrt{n}}\right).
    \end{align*}
\end{lem}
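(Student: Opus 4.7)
The plan is to split $b_n - b$ into a mean part $U_n$ and a partition part $V_n$:
\begin{align*}
U_n &:= \frac{1}{n}\sum_{i=0}^K \Ek*{X_{I_i^{(n)}} \given I_i^{(n)}} - \frac{\E{X_n}}{n} - \alpha_K \sum_{i=0}^K D_i \log D_i,\\
V_n &:= \frac{P_n}{n} - \sum_{t\in\mathcal T} \eins_{\mg{D\in C_t}} l_t(D),
\end{align*}
and to show $\lnorm{p}{U_n} = \O(n^{-1/2})$ and $\lnorm{p}{V_n} = \O(n^{-1/2})$ separately, from which the claim follows by the triangle inequality.

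For the mean part, I would invoke the sharp expansion $\E{X_n} = \alpha_K n\log n + \beta_K n + \O(\log n)$ from \Cref{thm:moment} (valid for $K\le 4$; this is precisely where the restriction in the statement originates) together with $\sum_i I_i^{(n)} = n - K$. Up to a deterministic error of order $\log(n)/n$, this reduces matters to bounding
\begin{equation*}
\Big\|\sum_{i=0}^K \big(\phi(I_i^{(n)}/n) - \phi(D_i)\big)\Big\|_p = \O(n^{-1/2}),\qquad \phi(x) := x\log x.
\end{equation*}
Since $I_i^{(n)}$ is binomial$(n-K, D_i)$ conditional on $D$, all $L_q$-moments of $I_i^{(n)}/n - D_i$ are $\O(n^{-1/2})$ uniformly in $D$. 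The technical obstacle is that $\phi'(x) = 1 + \log x$ blows up as $x\to 0$; I would handle this by splitting on $\mg{D_i \ge n^{-c}}$ versus its complement. On the first event the local Lipschitz constant of $\phi$ is $\O(\log n)$ and a direct moment bound delivers the target rate. On the rare event $\mg{D_i < n^{-c}}$, the bounded density of the beta$(1,K)$ law gives $\P{D_i < n^{-c}} = \O(n^{-c})$; combined with a binomial Chernoff tail bound on $I_i^{(n)}$, this contribution is made negligible by choosing $c$ sufficiently large.

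For the partition part, using $\sum_t \eins_{\mg{D\in C_t}} l_t(D) = \lopt(D) = \sum_j D_j \hopt(j)$ together with the decomposition $P_n = \sum_t \sum_j A_{t,j}^{(n)} h_t(j) + R_K$ from \eqref{def:pn}, I would split
\begin{equation*}
V_n = \frac{1}{n}\kl*{P_n - \sum_{j=0}^K I_j^{(n)}\hopt(j)} + \sum_{j=0}^K \hopt(j)\kl*{\frac{I_j^{(n)}}{n} - D_j} + \O(1/n).
\end{equation*}
The middle term has $L_p$-norm of order $n^{-1/2}$ by the same multinomial concentration used for $U_n$. The first term is the normalized excess cost $E_n/n$ caused by suboptimal tree choices; writing $\delta(D)$ for the distance from $D$ to $\partial C_{\topt}$, the Hoeffding-type argument from the proof of \Cref{lem:nah-an-opt} yields $\Ek*{E_n \given D} = \O(1/\delta(D))$ together with analogous bounds on higher conditional moments. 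Since $\delta(D)$ is a piecewise-linear, bounded-density functional of the uniformly distributed spacings on the open simplex, $\P{\delta(D) \le r} = \O(r)$; dyadically trading off $\delta$ against its probability then delivers $\lnorm{p}{V_n} = \O(n^{-1/2})$.

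The main obstacle in both parts is the interplay between the non-smoothness of the target---the blow-up of $\phi'$ at $0$ for $U_n$, and the jumps of the cone indicators $\eins_{\mg{D\in C_t}}$ for $V_n$---and the distribution of $D$ near the corresponding singularity. In both cases the rescue is provided by the bounded, explicit density of the spacing vector on the open simplex, which lets one integrate probabilities of bad events against the magnitude of the integrand. The restriction $K\le 4$ enters only through the $\O(\log n)$ error term in \Cref{thm:moment} used for $U_n$; all other ingredients (binomial concentration, Hoeffding, bounded spacing densities) are available for every $K$, so the bound is expected to extend as soon as the corresponding expansion of $\E{X_n}$ is established.
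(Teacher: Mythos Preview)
Your decomposition into the mean part $U_n$ and the partition part $V_n$ matches the paper exactly, and your treatment of $U_n$ is essentially the paper's: both reduce to bounding $\lnorm{p}{\phi(I_i^{(n)}/n)-\phi(D_i)}$ with $\phi(x)=x\log x$ after invoking the sharp expansion of $\E{X_n}$ for $K\le 4$. (The paper simply cites \cite[Proposition~2.2]{Neininger2015} for this bound; your Lipschitz-on-$\{D_i\ge n^{-c}\}$ argument, as written, loses a $\log n$ factor and needs a second-order Taylor step to recover the exact $n^{-1/2}$.)

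For $V_n$ the two arguments diverge. The paper does \emph{not} isolate the excess cost $E_n=P_n-\sum_j I_j^{(n)}\hopt(j)$; instead it bounds each $\lnorm{p}{A_{t,j}^{(n)}/n-\eins_{\{D\in C_t\}}D_j}$ directly, controlling the difference $A_{t,j}^{(n)}-I_j^{(n)}$ on $\{D\in C_t\}$ by the last time $n_0$ a suboptimal tree is used, and then reducing to $\lnorm{p}{I_j^{(n)}/n-D_j}=\O(n^{-1/2})$ via the Marcinkiewicz--Zygmund inequality.

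Your route through $E_n$ has a real gap. The Hoeffding argument of \Cref{lem:nah-an-opt} gives the tail bound $\Pk{\text{wrong tree at step }k\given D}\lesssim\exp(-ck\,\delta(D)^2)$, and hence the \emph{first}-moment bound $\Ek{E_n\given D}=\O(1/\delta)$ you state. But the ``analogous higher conditional moments'' that follow from this tail bound alone are $\Ek{|E_n|^p\given D}\le K^p\Ek{\tau^p\given D}=\O(\delta^{-2p})$, where $\tau$ is the last wrong step. Integrating $\min\bigl((Kn)^p,\delta^{-2p}\bigr)$ against the bounded density of $\delta$ then yields only $\lnorm{p}{E_n/n}=\O(n^{-1/(2p)})$, which misses the target already for $p=2$. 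The point you are not using is that the excess per wrong step has conditional mean $\psi_t(D)\asymp\delta$, not $\O(1)$: writing $E_n$ as its Doob decomposition $A_n+M_n$, one has $A_n\le K\delta\cdot\#\{\text{wrong steps}\}$ and a martingale $M_n$ whose quadratic variation is bounded by $K^2\cdot\#\{\text{wrong steps}\}$; Burkholder's inequality then gives $\Ek{|E_n|^p\given D}\lesssim\min(n^{p/2},\delta^{-p})$, and \emph{this} is what the dyadic trade-off needs. Without that martingale step your sketch does not deliver $\O(n^{-1/2})$ for any $p\ge 2$.
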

\begin{proof}
By triangle inequality we have
    \begin{align}\label{eqrn1}
        \begin{split}
           \left\Vert b_n-b \right\Vert_p & \le \left\Vert \frac{P_n}{n} - \sum_{t \in T} \eins_{\left\{D \in C_t\right\}}l_t(D)\right\Vert_p\\
           & \quad\quad~+ \left\Vert \frac{1}{n} \left(\sum_{i=0}^K \mathbb{E}\Big[X_{I_i^{(n)}}\Big|{I_i^{(n)}}\Big]-\mathbb{E}[X_n]\right)-\alpha_K \sum_{i=0}^K D_i \ln\left(D_i\right)\right\Vert_p.
        \end{split}
    \end{align}
    For the first summand in the latter display we obtain
    \begin{align*}
    \begin{split}
    \left\Vert \frac{P_n}{n} - \sum_{t \in T} \eins_{\left\{D \in C_t\right\}}l_t(D)\right\Vert_p \le &\sum_{t \in T} \sum_{i=0}^K \left\vert h_j(t) \right\vert \left\Vert\left (\frac{A_{t,i}^{(n)}}{n}-\eins_{\left\{D \in C_t\right\}}D_i\right)\right\Vert_p+\O\left(\frac{1}{n}\right)
    \end{split}
    \end{align*}
    and similarly to the proof of Theorem~\ref{thm_1} there exists an $n_0 \in \mathbb{N}$ such that
    \begin{align*}
    \begin{split}
    \lnorm*p{\frac{A_{t,i}^{(n)}}{n}-\eins_{\mg*{D \in C_t}}D_i}& =\E{\left\vert\frac{A_{t,i}^{(n)}}{n}-\eins_{\left\{D \in C_t\right\}}D_i \right\vert^p}^{\frac{1}{p}}\\
            & \le \kl[\Bigg]{\P{D \notin C_t} \E{\abs*{\frac{n_0}{n}}^p}
            +\mathbb{E}\left[\abs[\bigg]{\frac{A_{t,i}^{(n)}}{n}-\eins_{\left\{D \in C_t\right\}}D_i }^p \eins_{\left\{D \in C_t\right\}}\right]}^{\frac{1}{p}}\\
            & \le \frac{2n_0}{n}+\lnorm*p{\frac{I_i^{(n)}}{n}-D_i}.
        \end{split}
    \end{align*}
    Let $B_{n,u}$ denote a binomial-$(n,u)$-distributed and $\Ber_u$ a Bernoulli-$u$-distributed random variable.  Further, let $B(p,q)$ denote the beta-function with parameters $p$ and $q$. In particular, we have $B(1,K)=\frac{1}{K}$. Using bounding ideas of \cite{Neininger2015}, we condition on $D_i$, which is beta$(1,K)$-distributed, and obtain  
    \begin{align*}
        \begin{split}
          \lnorml*p{\frac{I_i^{(n)}}{n-K}-D_i}
            & = \frac{1}{B(1,K)}\int_0^1 {\left(1-u\right)^{K-1}} \mathbb{E}\Bigg[\bigg| \frac{I_i^{(n)}}{{n-K}}-D_i \bigg|^p \;\Bigg| \;D_i=u \Bigg]\mathrm du\\
            & = K\int_0^1 \left(1-u \right)^{K-1} \frac{1}{\left(n-K\right)^p} \Ek[\big]{\left\vert B_{n-K,u}-(n-K)u \right\vert^p }\mathrm du .
        \end{split}
    \end{align*}
    We now use the Marcinkiewicz--Zygmund inequality \cite{Chow1988} to get
    \begin{align*}
    \begin{split} 
    \lnorm*p{\frac{I_i^{(n)}}{n-K}-D_i}
            &\le \Bigg(K\int_0^1 \left(1-u \right)^{K-1} \frac{M_p}{(n-K)^p} \mathbb{E}\Bigg[\Bigg(\sum_{j=1}^{n-K}\left| \Ber_u \right|^2\Bigg)^{\frac{p}{2}}\Bigg]\mathrm du\Bigg)^{\frac{1}{p}}\\
            &\le \left(K\int_0^1 (1-u)^{K-1} \frac{M_p}{\left(n-K \right)^{\frac{p}{2}}} \mathrm du\right)^{\frac{1}{p}}\\
            &= \frac{M_p^{1/p}}{\sqrt{n-K}}
            \end{split}
   \end{align*}
    with a constant $M_p$ which only depends on $p$. Overall we obtain
    \begin{align}\label{boundI_i}
        \left\Vert \frac{I_i^{(n)}}{n}-D_i\right\Vert_p\le \left\Vert \frac{I_i^{(n)}}{n}-\frac{I_i^{(n)}}{n-K} \right\Vert_p+\left\Vert \frac{I_i^{(n)}}{n-K}-D_i \right\Vert_p=\O\left(\frac{1}{\sqrt{n}}\right)
    \end{align}
    and hence receive our bound for the first summand in (\ref{eqrn1})
    \[
    \left\Vert \frac{P_n}{n} - \sum_{t \in T} \eins_{\left\{D \in C_t\right\}}l_t(D)\right\Vert_p=\O\left(\frac{1}{\sqrt{n}}\right).
    \]
    To bound the second summand in (\ref{eqrn1}) tightly we need to improve on the bounds used to prove \Cref{thm:moment}. For $K≤4$, see \eqref{eq:error-bounds}, we have
    \begin{align*}
        \MoveEqLeft[1]
        \lnorm*p{
                \frac{1}{n} \kl*{\sum_{i=0}^K \E{X_{I_i^{(n)}}\mid{I_i^{(n)}}}-\E{X_n}}
               -\alpha_K \sum_{i=0}^K D_i \ln(D_i)
        }
        \\&= \Bigg\|
            \frac{1}{n}\left(\sum_{i=0}^K \alpha_K I_i^{(n)}\ln\left(I_i^{(n)}\right)+\beta_K I_i^{(n)}-\left(\alpha_K n\ln(n)+\beta_K n\right)\right)
                     \\&\quad \quad\quad~  - \sum_{i=0}^{K} \alpha_K D_i \ln(D_i) \Bigg\|_p+\O\left(\frac{1}{\sqrt{n}}\right)
        \\&= \lnorm*p{
            - \frac{\beta_KK}{n}+\alpha_K\left(\sum_{i=0}^K \frac{I_i^{(n)}}{n}\ln\frac{I_i^{(n)}}{n}-D_i\ln(D_i)\right)-\alpha_K \frac{K}{n} \ln(n)
            }
            +\O\left(\frac{1}{\sqrt{n}}\right)
        \\&≤ \alpha_K \sum_{i=0}^K \lnorm*p{\frac{I_i^{(n)}}{n} \ln\frac{I_i^{(n)}}{n}-D_i \ln(D_i)}+\O\left(\frac{1}{\sqrt{n}}\right).
     \end{align*}
     Using the same arguments as in the proof of \cite[Proposition 2.2]{Neininger2015} for $p=3$, we obtain
     \begin{align*}
     \left\Vert \frac{I_i^{(n)}}{n} \ln\frac{I_i^{(n)}}{n}-D_i \ln(D_i)\right\Vert_p =\O\left(\frac{1}{\sqrt{n}}\right),
     \end{align*}
     hence the statement of Lemma \ref{toll} follows.
\end{proof}
 Let $\varepsilon>0$ be fixed. We are proving 
 \begin{align} \label{l2bound}
     \Delta(n) \le cn^{-1/2+\varepsilon}
 \end{align}
 for an appropriate constant $c>0$ by induction over $n$. The induction start is clear. Recall that we have the bound (\ref{Delta_est}) for $\Delta(n)$. To bound  the first summand on the right hand side of (\ref{Delta_est}) we start rewriting
\begin{align*}
    \begin{split}
        \left(\frac{I_i^{(n)}}{n} Y_{I_i^{(n)}}^{(i)}-D_i Z_K^{(i)}\right)^2 
        & = \left(\frac{I_i^{(n)}}{n} \left(Y_{I_i^{(n)}}^{(i)}-Z_K^{(i)}\right)\right)^2+\left(\left(\frac{I_i^{(n)}}{n}-D_i \right)Z_K^{(i)}\right)^2\\
        & \quad~\quad\quad + 2 \frac{I_i^{(n)}}{n} \left(Y_{I_i^{(n)}}^{(i)}-Z_K^{(i)}\right)\left(\frac{I_i^{(n)}}{n}-D_i \right)Z_K^{(i)}.
    \end{split}
\end{align*}
For the final factor in the latter display we have
\begin{equation*}
   \mathbb{E}\left[\bigg(\bigg(\frac{I_i^{(n)}}{n}-D_i \bigg)Z_K^{(i)}\bigg)^2 \right] \le \bigg\| \frac{I_i^{(n)}}{n}-D_i \bigg\|_2^2 \Vert Z_K \Vert_2^2 = \O\left(\frac{1}{n} \right),
\end{equation*}
since $Z_K$ has a finite second moment, see Theorem~\ref{thm_1}. Conditioning on  $I_i^{(n)}=j$ and $D_i=u$, we have
\begin{equation*}
    \begin{split}
        \mathbb{E}\left[\frac{j}{n}\left(\frac{j}{n}-u \right) \left(Y_{j}^{(i)}-Y^{(i)}\right)Z_K^{(i)}\right]
        & \le \frac{j}{n} \left\vert \frac{j}{n}-u \right\vert \left\Vert Y_j-Z_K \right\Vert_2 \left\Vert Z_K \right\Vert_2\\
        & =\frac{j}{n} \left\vert \frac{j}{n}-u \right\vert \Delta(j) \sigma,
    \end{split}
\end{equation*}
using the Cauchy--Schwarz inequality and $\sigma:=\Vert Z_K \Vert_2<\infty.$ With the inductive hypothesis $\Delta(j) \le cj^{-1/2+\varepsilon}$ for $j<n$ we obtain
\begin{align*}
\begin{split}
        \Ek*{\frac{I_i^{(n)}}{n} \left(Y_{I_i^{(n)}}^{(i)}-Z_K^{(i)}\right)\left(\frac{I_i^{(n)}}{n}-D_i \right)Z_K^{(i)} \given I_i^{(n)}=j,D_i=u }
        &\le c\sigma\frac{j^{\frac{1}{2}+\varepsilon}}{n}\left\vert \frac{j}{n}-u \right\vert\\
        &\le \frac{c \sigma}{n^{\frac{1}{2}-\varepsilon}}\left\vert \frac{j}{n}-u \right\vert.
    \end{split}
\end{align*}
and there exists an $w>0$ such that
\begin{align*}
\begin{split}
       \E{\frac{I_i^{(n)}}{n} \left(Y_{I_i^{(n)}}^{(i)}-Z_K^{(i)}\right) \left(\frac{I_i^{(n)}}{n}-D_i \right)Z_K^{(i)}}
       \nonumber & = \E{\Ek[\bigg]{\frac{I_i^{(n)}}{n} \left(Y_{I_i^{(n)}}^{(i)}-Z_K^{(i)}\right)\kl[\bigg]{\frac{I_i^{(n)}}{n}-D_i}Z_K^{(i)} \given I_i^{(n)},D_i}}\nonumber\\
       & \le \frac{c\sigma}{n^{\frac{1}{2}-\varepsilon}} \left\Vert \frac{I_i^{(n)}}{n}-D_i \right\Vert_1\nonumber\\
       &\le \frac{w\sigma c}{n^{\frac{1}{2}-\varepsilon}}\frac{1}{\sqrt{n}}= \frac{w\sigma c}{n^{1-\varepsilon}}
       \end{split}
\end{align*}
Note that the use of optimal couplings implies 
$\mathbb{E}[(j/n)^2 (Y_j-Z_K )^2]=(j/n)^2 \Delta^2(j)$, hence
\[
\mathbb{E}\Bigg[\Bigg(\frac{I_i^{(n)}}{n} \Bigg(Y_{I_i^{(n)}}^{(i)}-Z_K^{(i)}\Bigg)\Bigg)\Bigg]^2= \mathbb{E}\Bigg[\Bigg(\frac{I_i^{(n)}}{n}\Bigg)^2\Delta^2\Big(I_i^{(n)}\Big)\Bigg].
\]
Collecting our estimates, we obtain 
\[
\Delta^2(n) \le (K+1) \Bigg(\mathbb{E}\Bigg[\Bigg(\frac{I_0^{(n)}}{n}\Bigg)^2 \Delta^2\left(I_0^{(n)}\right)\Bigg]+\frac{w \sigma c}{n^{1-\varepsilon}}\Bigg)+\O\left(\frac{1}{n}\right),
\]
since the random variables $I_i^{(n)}$ are identically distributed for all $i=0,\dots,K$. Note that $D_i$ is $\operatorname{Beta}(1,K)$ distributed and given $D=\left(d_0,\dots,d_K\right)$ the sizes of the subproblems $I^{(n)}$ are multinominally$(n-K;d_0,\dots,d_K)$ distributed. Using the inductive hypothesis, we have
\begin{equation*}
    \begin{split}
        \mathbb{E} \left[\left(\frac{I_0^{(n)}}{n}\right)^2 \Delta^2\left(I_0^{(n)}\right)\right]
        & = \frac{1}{B(1,K)}\int_0^1 \left(1-u \right)^{K-1} \sum_{j=0}^{n-K} \binom{n-K}{j} u^j (1-u)^{n-K-j}\\
        & \quad \quad \quad \quad \quad \quad~ \times\mathbb{E}\left[\left(\frac{I_0^{(n)}}{n}\right)^2 \Delta^2\left(I_0^{(n)}\right) \mid \left(I_0^{(n)}=j,D_0=u \right)\right] \mathrm du\\
        & \le \frac{K}{n^2} \sum_{j=1}^{n-K} \frac{c^2(n-K)!}{j! (n-K-j)!} \frac{j^2}{j^{1-2\varepsilon}} \int_0^1 u^j (1-u)^{n-j-1} \mathrm du\\
        & = \frac{c^2K(n-K)!}{n^2}\sum_{j=1}^{n-K} \frac{1}{j! (n-K-j)!} \frac{j! (n-j-1)!}{n!}j^{1+2\varepsilon}\\
        & = \frac{c^2K(n-K)!}{n^2 n!} \sum_{j=1}^{n-K} \frac{(n-j-1)!}{(n-K-j)!} j^{1+2\varepsilon}.
    \end{split}
\end{equation*}
By Lemma \ref{tech} there exists a $\xi>1$ such that
\[
\sum_{j=1}^{n-K} \frac{(n-j-1)!}{(n-K-j)!} j^{1+2\varepsilon} \le \frac{n^{2\varepsilon}}{\xi} \frac{n!}{K(K+1)(n-K-1)!}.
\]
It follows
\[
\E{\Bigg(\frac{I_0^{(n)}}{n}\Bigg)^2 \Delta^2\left(I_0^{(n)}\right)}≤ \frac{c^2(n-K)n^{2\varepsilon}}{\xi (K+1) n^2} \le \frac{c^2}{\xi (K+1)} \frac{1}{n^{1-2\varepsilon}}
\]
and putting the estimates together, we obtain with an appropriate constant $d>0$ that
\begin{equation*}
    \begin{split}
         \Delta^2(n) & \le (K+1)\left(\frac{c^2} {\xi (K+1)} \frac{1}{n^{1-2\varepsilon}}+\frac{w\sigma c}{n^{1-2\varepsilon}}\right)+\frac{d}{n^{1-2\varepsilon}}\\
         & = \left(\frac{1}{\xi}c^2+(K+1)w\sigma c+d\right) \frac{1}{n^{1-2\varepsilon}}\\
         & \le c^2 \frac{1}{n^{1-2\varepsilon}},
    \end{split}
\end{equation*}
the last inequality being valid for sufficiently large $c$ in view of $\frac{1}{\xi}<1$. This finishes the proof of the bound on the $\ell_2$ rate of convergence stated in (\ref{l2bound}).\\
We now extend the bound in (\ref{l2bound}) to $\ell_p$ for every $p \ge 1$. Because $\ell_p \le \ell_q$ for $p \le q$, it is sufficient to consider only $p \in \mathbb{N}$. The case $p=2$ has just been shown above. We now consider $p \in \mathbb{N}$ with $p \ge 3$. Similar to Lemma 3.2 in \cite{Fill2002}, we have for every $m \in \mathbb{N}$, independent random variables $Q_1,\dots,Q_{m+1}$ and $p \in \mathbb{N}, p \ge 2$ that
\begin{align}\label{independent}
\E{\abs[\bigg]{\sum_{i=1}^{m+1} Q_i }^p} \le \sum_{i=1}^m \mathbb{E}\left[\left\vert Q_i \right\vert^p \right]+\left(\sum_{i=1}^m \left\Vert Q_i \right\Vert_{p-1}+\left\Vert Q_{m+1} \right\Vert_p \right)^p.
\end{align}
We obtain
\[
\Delta_p(n):=\ell_p\left(Y_n,Z_K\right) ≤\lnorm*p{\sum_{i=0}^{K+1} {W_i}}
\]
with the $W_i$ defined in (\ref{W_i}). The Minkowski inequality yields
\begin{align*}
    \mathbb{E}\Big[\left\vert W_i \right\vert ^{p-1} \;\Big|\; I_i^{(n)}=j,D_i=u \Big]^{\frac{1}{p-1}} \le \frac{j}{n} \ell_{p-1}\left(Y_j,Z_K \right)+\left\vert \frac{j}{n}-u \right\vert \left\Vert Z_K \right\Vert_{p-1}.
\end{align*}
The second part of Theorem~\ref{thm_1} implies $\tau:= \left\Vert Z_K\right\Vert_{p-1}<\infty$ since a finite moment generating function yields $\Vert Z_K \Vert_p<\infty$ for all $p$. The inductive hypothesis for induction on $p$ is $\ell_{p-1}\left(Y_j,Z_K\right) \le c_{p-1} j^{-\frac{1}{2}+\varepsilon}$ for all $j\ge 1$. Hence, we obtain
\[
\mathbb{E}\left[\left\vert W_i \right\vert ^{p-1} \mid \left(I_i^{(n)}=j,D_i=u\right)\right]^{\frac{1}{p-1}} \le c_{p-1}\frac{j^{\frac{1}{2}+\varepsilon}}{n}+\tau \left\vert \frac{j}{n}-u \right\vert \le \frac{c_{p-1}}{n^{\frac{1}{2}-\varepsilon}}+\tau \left\vert \frac{j}{n}-u \right\vert
\]
and therefore, writing $\Ekv D{\cdot} := \Ek{\cdot\given D, I^{(n)}}$,
\begin{align*}
\Ekv*D{\abs*{W_i }^{p-1}}^{\frac{1}{p-1}} \le \frac{c_{p-1}}{n^{\frac{1}{2}-\varepsilon}}+\tau \left\vert \frac{I_i^{(n)}}{n}-D_i \right\vert.
\end{align*}
Let $k=(k_0, k_1, \dots, k_{K+1}) \in \N^{K+2}$ be a multiindex with $\abs k := k_0+\dots+k_{K+1}=p$.
Expanding the power of the sum,
\begin{align*}
    \begin{split}
    \kl*{\sum_{i=0}^K \Ekv*D{\abs{W_i}^{p-1}}^{\frac{1}{p-1}}+\Ekv*D{\left\vert b_n - b \right\vert^p}^{\frac{1}{p}}}^p
    &=\sum_{\abs k=p} \binom{p}{k}
    \prod_{i=0}^K\left(\Ekv*D{\left\vert W_i \right\vert^{p-1}}^{\frac{1}{p-1}}\right)^{k_i}
    \Ekv*D{\abs{b_n - b}^p}^{\frac{k_{K+1}}{p}}\\
    &≤\sum_{\abs k=p} \binom{p}{k}\prod_{i=0}^{K} \left(\frac{c_{p-1}}{n^{\frac{1}{2}-\varepsilon}}+\tau \abs[\bigg]{ \frac{I_i^{(n)}}{n}-D_i} \right)^{k_i}
    \hspace{-1em}
    \Ekv*D{\abs{b_n - b}^p}^{\frac{k_{K+1}}{p}},
    \end{split}
\end{align*}
and with \eqref{independent} we have
\begin{equation*}
        \E{\abs[\bigg]{\sum_{i=0}^{K+1} W_i }^p} ≤ 
        \sum_{i=0}^K \E{\abs{W_i}^p}+\sum_{\abs k=p} \binom{p}{k}
        \Ek[\Bigg]{\prod_{i=0}^{K} \left(\frac{c_{p-1}}{n^{\frac{1}{2}-\varepsilon}}+\tau \left\vert \frac{I_i^{(n)}}{n}-D_i \right\vert \right)^{k_i}
        \hspace{-1em}\Ekv*D{\abs{b_n - b}^p}^{\frac{k_{K+1}}{p}}}.
\end{equation*}
To further analyze the latter term we use the H\"older's inequality, which implies
\begin{equation}\label{Hoelder}
    \begin{split}
        &\mathbb{E}\left[\prod_{i=0}^{K} \left(\frac{c_{p-1}}{n^{\frac{1}{2}-\varepsilon}}+\tau \left\vert \frac{I_i^{(n)}}{n}-D_i \right\vert \right)^{k_i}
        \Ekv*D{\abs{b_n - b}^p}^{\frac{k_{K+1}}{p}}\right]\\
        & \quad \le \prod_{i=0}^K
        \lnorm[\Bigg]{K+2}{ 
            \left(\frac{c_{p-1}}{n^{\frac{1}{2}-\varepsilon}}+\tau \abs[\bigg]{\frac{I_i^{(n)}}{n}-D_i} \right)^{k_i}
        }
        \lnorm[\Bigg]{K+2}{
        \Ekv*D{\abs{b_n - b}^p}^{\frac{k_{K+1}}{p}}
        }\hspace{-1.5em}.
    \end{split}
\end{equation}
The second factor of the latter term is bounded with Lemma \ref{toll} by
\[
\lnorm[\Bigg]{\mathrlap{K+2}}{
        \Ekv*D{\abs{b_n - b}^p}^{\frac{k_{K+1}}{p}}
}\;\;=\O\left({n^{-k_{K+1}/2}}\right).
\]
For the first term in \eqref{Hoelder}, we have for $k_i \ge 1$
\begin{align*}
    \begin{split}
        \mathbb{E}\left[\left\vert \frac{c_{p-1}}{n^{\frac{1}{2}-\varepsilon}}+\tau \left\vert\frac{I_i^{(n)}}{n}-D_i\right\vert\right\vert^{(K+2)k_i}\right]
        & =\sum_{j=0}^{(K+2)k_i} \binom{(K+2)k_i}{j}\frac{c_{p-1}^j}{n^{\frac{j}{2}-j\varepsilon}}\tau^{(K+2)k_i-j}\mathbb{E}\left[\left\vert \frac{I_i^{(n)}}{n}-D_i \right\vert ^{(K+2)k_i-j}\right]\\
        & =\sum_{j=0}^{(K+2)k_i} \binom{(K+2)k_i}{j} \frac{c_{p-1}^j}{n^{\frac{j}{2}-j\varepsilon}}\tau^{(K+2)k_i-j}\left\Vert \frac{I_i^{(n)}}{n}-D_i \right\Vert_{(K+2)k_i-j}^{(K+2)k_i-j}.
    \end{split}
\end{align*}
With \eqref{boundI_i} there exists some constant $w_i>0$ such that
\begin{align*}
\begin{split}
       \mathbb{E}\left[\left\vert \frac{c_{p-1}}{n^{\frac{1}{2}-\varepsilon}}+\tau \left\vert\frac{I_i^{(n)}}{n}-D_i\right\vert\right\vert^{(K+2)k_i}\right] & \le \sum_{j=0}^{(K+2)k_i} \binom{(K+2)k_i}{j} \frac{c_{p-1}^j}{n^{\frac{j}{2}-j\varepsilon}}\tau^{(K+2)k_i-j}\frac{w_i}{n^{\frac{(K+2)k_i-j}{2}}}\\
       & \le \sum_{j=0}^{(K+2)k_i} \binom{(K+2)k_i}{j} w_i c_{p-1}^j \tau^{(K+2)k_i-j}\frac{1}{n^{\frac{(K+2)k_i}{2}-(K+2)k_i\varepsilon}}\\
       & =\O\left(\frac{1}{n^{\frac{(K+2)k_i}{2}-(K+2)k_i\varepsilon}}\right).
\end{split}
\end{align*}
This yields 
\begin{align*}
\left\Vert \left(\frac{c_{p-1}}{n^{\frac{1}{2}-\varepsilon}}+\tau \left\vert \frac{I_i^{(n)}}{n}-D_i \right\vert\right)^{k_i} \right\Vert_{K+2}=\O\left(\frac{1}{n^{k_i/2-k_i\varepsilon}}\right)
\end{align*} 
and overall we obtain
\[
\mathbb{E}\left[\prod_{i=0}^{K} \left(\frac{c_{p-1}}{n^{\frac{1}{2}-\varepsilon}}+\tau \left\vert \frac{I_i^{(n)}}{n}-D_i \right\vert \right)^{k_i}
\Ekv*D{\abs{b_n - b}^p}^{\frac{k_{K+1}}{p}}\right]=\O\left(\frac{1}{n^{\sum k_i/2-\sum k_i\varepsilon}}\right)
\]
and thus
\begin{align*}
    \begin{split}
        \sum_{\abs k=p} \binom{p}{k}
        \Ek[\Bigg]{\prod_{i=0}^{K} \left(\frac{c_{p-1}}{n^{\frac{1}{2}-\varepsilon}}+\tau \left\vert \frac{I_i^{(n)}}{n}-D_i \right\vert \right)^{k_i}
        \hspace{-1em}\Ekv*D{\abs{b_n - b}^p}^{\frac{k_{K+1}}{p}}}=\O\left({n^{-\frac{p}{2}+p\varepsilon}}\right).
    \end{split}
\end{align*}
Collecting the estimates, we obtain
\begin{align*}
    \Delta_p^p(n) \le (K+1) \mathbb{E}\left[\left\vert W_0 \right\vert^p \right]+\O\left(\frac{1}{n^{p\left(\frac{1}{2}-\varepsilon\right)}}\right).
\end{align*}
For the term $\mathbb{E}\left[\left\vert W_0 \right\vert^p \right]$, analogously to the case $p=2$, we have
\begin{align*}
    \begin{split}
        \E{\left\vert W_0 \right\vert^p }& = \mathbb{E}\left[\left\vert W_0 \right\vert^p \mid \left(I_0^{(n)},D_0 \right)\right]\\
        & = \sum_{r=0}^{p-1} \binom{p}{r}\mathbb{E}\left[\left(\frac{I_0^{(n)}}{n}\right)^r \Delta_p^r\left(I_0^{(n)}\right)\tau^{p-r}\left\vert \frac{I_0^{(n)}}{n}-D_0 \right\vert^{p-r}\right]\\
        & \quad + \mathbb{E}\left[\left(\frac{I_0^{(n)}}{n}\right)^p \Delta_p^p\left(I_0^{(n)}\right)\right].
\end{split}
\end{align*}
The inductive hypothesis $\Delta_p(j) \le cj^{-\frac{1}{2}+\varepsilon}$ for $j<n$ yields
\begin{align*}
    \begin{split}
        \mathbb{E}\left[\left(\frac{I_0^{(n)}}{n} \right)^r \Delta_p^r\left(I_0^{(n)}\right)\tau^{p-r}\left\vert \frac{I_0^{(n)}}{n}-D_0 \right\vert^{p-r}\right] & \le \frac{\tau^{p-r}c^r}{n^{r/2-r\varepsilon}} \mathbb{E}\left[\left\vert \frac{I_0^{(n)}}{n}-D_0 \right\vert^{p-r}\right]\\
        & \le \frac{a_r c^r}{n^{p/2-p\varepsilon}}
    \end{split}
\end{align*}
for some constants $a_r>0$ for $r=0,\dots,p-1$. The term $\mathbb{E}\left[\left(\frac{I_0^{(n)}}{n}\right)^p \Delta_p^p\left(I_0^{(n)}\right)\right]$ is bounded explicitly through
\begin{align*}
    \begin{split}
        \mathbb{E} \left[\left(\frac{I_0^{(n)}}{n}\right)^p\Delta_p^p\left(I_0^{(n)}\right)\right]
        & = \frac{1}{B(1,K)}\int_0^1 \left(1-u \right)^{K-1} \sum_{j=0}^{n-K} \binom{n-K}{j} u^j (1-u)^{n-K-j}\\
        & \quad \quad \quad \quad \quad \quad \times\mathbb{E}\left[\left(\frac{I_0^{(n)}}{n}\right)^p \Delta_p^p\left(I_0^{(n)}\right) \mid \left(I_0^{(n)}=j,D_0=u \right)\right] du\\
        & \le \frac{K}{n^p} \sum_{j=1}^{n-K} \frac{c^p(n-K)!}{j! (n-K-j)!} \frac{j^p}{j^{\frac{p}{2}-p\varepsilon}} \int_0^1 u^j (1-u)^{n-j-1} du\\
        & \le \frac{c^p K(n-K)! n^{\frac{p}{2}-1}}{n^p}\sum_{j=1}^{n-K} \frac{1}{j! (n-K-j)!} \frac{j! (n-j-1)!}{n!}j^{1+p\varepsilon}\\
        & = \frac{c^p K(n-K)!}{n^{\frac{p}{2}+1} n!} \sum_{j=1}^{n-K} \frac{(n-j-1)!}{(n-K-j)!} j^{1+p\varepsilon}.
    \end{split}
\end{align*}
By Lemma \ref{tech} there exists a $\xi>1$ (depending in $p$ and being different from the $\xi$ appearing above) such that
\begin{align*}
    \sum_{j=1}^{n-K} \frac{(n-j-1)!}{(n-K-j)!} j^{1+p\varepsilon} \le \frac{n^{p\varepsilon}}{\xi} \frac{n!}{K(K+1)(n-K-1)!}.
\end{align*}
Plugging in, we obtain 
\begin{align*}
    \mathbb{E}\left[\left(\frac{I_0^{(n)}}{n}\right)^p \Delta^p\left(I_0^{(n)}\right)\right] & \le \frac{c^p(n-K)n^{p\varepsilon}}{\xi (K+1) n^{\frac{p}{2}+1}} \le \frac{c^p}{\xi (K+1)} \frac{1}{n^{\frac{p}{2}-p\varepsilon}}.
\end{align*}
Overall we have
\begin{align*}
    \Delta_p^p(n) \le \left(\frac{1}{\xi}c^p+ \sum_{i=0}^{p-1} \tilde{a}_i c^i\right)\frac{1}{n^{\frac{p}{2}-p\varepsilon}} \le \frac{c^p}{n^{\frac{p}{2}-p\varepsilon}}
\end{align*}
with some constants $\tilde{a}_0,\tilde{a}_1,\dots,\tilde{a}_{p-1}>0$ and $c$ sufficiently large. This finishes the proof of the bounds on the $\ell_p$ metrics.

To bound the distance between $Y_n$ and $Z_K$ in the Kolmogorov-Smirnov metric, we use Lemma 5.1 in \cite{Fill2002}, which implies
\[
\varrho\left(Y_n,Z_K \right) \le \left(\left(1+p\right) \left\Vert f_{Z_K} \right\Vert^p_{\infty}\right)^{\frac{1}{p+1}} \left(\ell_p\left(Y_n,Z_K \right)\right)^{\frac{p}{p+1}}
\]
since $Z_K$ has a bounded density function $f_{Z_K}$ with Theorem~\ref{thm:smooth}. We know that for all $p \ge 1$ and $\delta>0$
\[
\ell_p\left(Y_n,Z_K\right) \le \frac{c_p}{n^{\frac{1}{2}-\delta}}
\]
with some constant $c_p$. For some fixed $\varepsilon$, we can choose $p$ large enough such that $p/(2(1+p))>\frac{1}{2}-\varepsilon$. It is possible to choose $\delta>0$ with $(p/(1+p))\left(\frac{1}{2}-\delta\right)>\frac{1}{2}-\varepsilon$ and thereby obtain
\[
\varrho\left(Y_n,Z_K \right) \le c_p'\frac{1}{n^{\frac{p}{1+p}\left(\frac{1}{2}-\delta\right)}} \le c_p' \frac{1}{n^{\frac{1}{2}-\varepsilon}}
\]
where the constant $c_p'$ depends on $\varepsilon$ but not on $n$. This finishes the proof of Theorem~\ref{thm:speed}.
%\subsubsection*{Acknowledgement}

\bibliographystyle{amsplain}
\bibliography{patricia_bib}

@incollection {fija00,
    AUTHOR = {Fill, James Allen and Janson, Svante},
     TITLE = {Smoothness and decay properties of the limiting {Q}uicksort
              density function},
 BOOKTITLE = {Mathematics and computer science ({V}ersailles, 2000)},
    SERIES = {Trends Math.},
     PAGES = {53--64},
 PUBLISHER = {Birkh\"{a}user, Basel},
      YEAR = {2000},
   MRCLASS = {68P10 (60E05 60F05)},
  MRNUMBER = {1798287},
}

@article{Fill2002,
  author     = {Fill, James Allen and Janson, Svante},
  title      = {Quicksort asymptotics},
  note       = {Analysis of algorithms},
  journal    = {J. Algorithms},
  fjournal   = {Journal of Algorithms. Cognition, Informatics and Logic},
  volume     = {44},
  year       = {2002},
  number     = {1},
  pages      = {4--28},
  issn       = {0196-6774},
  mrclass    = {68P10 (68W40)},
  mrnumber   = {1932675},
  mrreviewer = {E.\ Rodney\ Canfield},
  doi        = {10.1016/S0196-6774(02)00216-X},
  url        = {https://doi.org/10.1016/S0196-6774(02)00216-X}
}

@article{hoare:quicksort,
    author = {Hoare, C. A. R.},
    title = {Quicksort},
    journal = {The Computer Journal},
    volume = {5},
    number = {1},
    pages = {10-16},
    year = {1962},
    month = {01},
    issn = {0010-4620},
    doi = {10.1093/comjnl/5.1.10},
    url = {https://doi.org/10.1093/comjnl/5.1.10},
    eprint = {https://academic.oup.com/comjnl/article-pdf/5/1/10/1111445/050010.pdf},
}

@article{holmgren:path-length,
  author    = {Nicolas Broutin and Cecilia Holmgren},
  title     = {{The total path length of split trees}},
  volume    = {22},
  journal   = {The Annals of Applied Probability},
  number    = {5},
  publisher = {Institute of Mathematical Statistics},
  pages     = {1745 -- 1777},
  keywords  = {data structure, limit distribution, path length, Random tree},
  year      = {2012},
  doi       = {10.1214/11-AAP812},
  url       = {https://doi.org/10.1214/11-AAP812}
}

@Misc{Heuberger-Krenn:2025:multi-pivot-quicksort,
  author = 	 {Clemens Heuberger and Daniel Krenn},
  title = 	 {Analysis and optimality of multi-pivot quicksort},
  note =         {in preparation},
  year = 	 {2025},
}

@article{hwang:mnary,
  author  = {Chern, Hua-Huai and Hwang, Hsien-Kuei},
  title   = {Phase changes in random m-ary search trees and generalized quicksort},
  journal = {Random Structures \& Algorithms},
  volume  = {19},
  number  = {3-4},
  pages   = {316-358},
  doi     = {https://doi.org/10.1002/rsa.10005},
  url     = {https://onlinelibrary.wiley.com/doi/abs/10.1002/rsa.10005},
  eprint  = {https://onlinelibrary.wiley.com/doi/pdf/10.1002/rsa.10005},
  year    = {2001}
}

@book{kn73,
  author     = {Knuth, Donald E.},
  title      = {The art of computer programming. {V}olume 3},
  series     = {Addison-Wesley Series in Computer Science and Information
                Processing},
  note       = {Sorting and searching},
  publisher  = {Addison-Wesley Publishing Co., Reading, Mass.-London-Don
                Mills, Ont.},
  year       = {1973},
  pages      = {xi+722 pp. (1 foldout)},
  mrclass    = {68A50},
  mrnumber   = {445948},
  mrreviewer = {M. Muller}
}

@article{leckey19,
  author   = {Leckey, Kevin},
  title    = {On densities for solutions to stochastic fixed point equations},
  journal  = {Random Structures \& Algorithms},
  volume   = {54},
  number   = {3},
  pages    = {528-558},
  keywords = {Pólya urn, probability density function, Schwartz space, split tree, stochastic fixed point equation},
  doi      = {https://doi.org/10.1002/rsa.20799},
  url      = {https://onlinelibrary.wiley.com/doi/abs/10.1002/rsa.20799},
  eprint   = {https://onlinelibrary.wiley.com/doi/pdf/10.1002/rsa.20799},
  abstract = {Abstract We consider systems of stochastic fixed point equations that arise in the asymptotic analysis of random recursive structures and algorithms such as Quicksort, large Pólya urn processes, and path lengths of random recursive trees and split trees. The main result states sufficient conditions on the fixed point equations that imply the existence of bounded, smooth, rapidly decreasing Lebesgue densities.},
  year     = {2019}
}

@article{mahmoud:internal-path-length,
author = {Mahmoud, Hosam},
year = {1986},
month = {01},
pages = {111-117},
title = {On the average internal path length of {m-ary} search trees},
volume = {23},
journal = {Acta Informatica},
doi = {10.1007/BF00268078}
}

@article{Neininger2015,
  author   = {Neininger, Ralph},
  title    = {Refined {Q}uicksort asymptotics},
  journal  = {Random Structures Algorithms},
  fjournal = {Random Structures \& Algorithms},
  volume   = {46},
  year     = {2015},
  number   = {2},
  pages    = {346--361},
  issn     = {1042-9832,1098-2418},
  mrclass  = {68P10 (60F05)},
  mrnumber = {3302901},
  doi      = {10.1002/rsa.20497},
  url      = {https://doi.org/10.1002/rsa.20497}
}

@inproceedings {straub,
    AUTHOR = {Neininger, Ralph and Straub, Jasmin},
     TITLE = {Probabilistic analysis of the dual-pivot quicksort
              ``{C}ount''},
 BOOKTITLE = {2018 {P}roceedings of the {F}ifteenth {W}orkshop on {A}nalytic
              {A}lgorithmics and {C}ombinatorics ({ANALCO})},
     PAGES = {1--7},
 PUBLISHER = {SIAM, Philadelphia, PA},
      YEAR = {2018},
   MRCLASS = {68W40},
  MRNUMBER = {3773630},
       DOI = {10.1137/1.9781611975062.1},
       URL = {https://doi-org/10.1137/1.9781611975062.1},
}

@misc{Yaroslavskiy2009,
author={Yaroslavskiy, Vladimir},
title={Replacement of quicksort in java.util.arrays with new dual-pivot quicksort},
year={2009},
url={http://mail.openjdk.java.net/pipermail/core-libs-dev/2009-September/002630.html}
}

@article {Neininger2001,
    AUTHOR = {Neininger, Ralph},
     TITLE = {On a multivariate contraction method for random recursive
              structures with applications to {Q}uicksort},
      NOTE = {Analysis of algorithms (Krynica Morska, 2000)},
   JOURNAL = {Random Structures Algorithms},
  FJOURNAL = {Random Structures \& Algorithms},
    VOLUME = {19},
      YEAR = {2001},
    NUMBER = {3-4},
     PAGES = {498--524},
      ISSN = {1042-9832,1098-2418},
   MRCLASS = {68P10 (60F05)},
  MRNUMBER = {1871564},
MRREVIEWER = {Istv\'an\ Ratk\'o},
       DOI = {10.1002/rsa.10010},
       URL = {https://doi.org/10.1002/rsa.10010}
}

@incollection {Wild2012,
    AUTHOR = {Wild, Sebastian and Nebel, Markus E.},
     TITLE = {Average case analysis of {J}ava 7's dual pivot {Q}uicksort},
 BOOKTITLE = {Algorithms---{ESA} 2012},
    SERIES = {Lecture Notes in Comput. Sci.},
    VOLUME = {7501},
     PAGES = {825--836},
 PUBLISHER = {Springer, Heidelberg},
      YEAR = {2012},
      ISBN = {978-3-642-33090-2; 978-3-642-33089-6},
   MRCLASS = {68W40 (68P10)},
  MRNUMBER = {3032004},
       DOI = {10.1007/978-3-642-33090-2\_71},
       URL = {https://doi.org/10.1007/978-3-642-33090-2_71}
}

@article{wild-nebel-neininger,
author = {Wild, Sebastian and Nebel, Markus E. and Neininger, Ralph},
title = {Average Case and Distributional Analysis of Dual-Pivot Quicksort},
year = {2015},
issue_date = {January 2015},
publisher = {Association for Computing Machinery},
address = {New York, NY, USA},
volume = {11},
number = {3},
issn = {1549-6325},
url = {https://doi.org/10.1145/2629340},
doi = {10.1145/2629340},
journal = {ACM Trans. Algorithms},
month = jan,
articleno = {22},
numpages = {42},
keywords = {limiting distributions, dual-pivot Quicksort, Analysis of algorithms}
}

@article {Roesler1991,
    AUTHOR = {R\"osler, Uwe},
     TITLE = {A limit theorem for ``{Q}uicksort''},
   JOURNAL = {RAIRO Inform. Th\'eor. Appl.},
  FJOURNAL = {RAIRO Informatique Th\'eorique et Applications. Theoretical
              Informatics and Applications},
    VOLUME = {25},
      YEAR = {1991},
    NUMBER = {1},
     PAGES = {85--100},
      ISSN = {0988-3754},
   MRCLASS = {68P10},
  MRNUMBER = {1104413},
       DOI = {10.1051/ita/1991250100851},
       URL = {https://doi.org/10.1051/ita/1991250100851},
}

@article {Aumüller2016,
    AUTHOR = {Aum\"uller, Martin and Dietzfelbinger, Martin and Klaue, Pascal},
     TITLE = {How good is multi-pivot quicksort?},
   JOURNAL = {ACM Trans. Algorithms},
  FJOURNAL = {ACM Transactions on Algorithms},
    VOLUME = {13},
      YEAR = {2016},
    NUMBER = {1},
     PAGES = {Art. 8, 47},
      ISSN = {1549-6325,1549-6333},
   MRCLASS = {68P10 (68W40)},
  MRNUMBER = {3598113},
       DOI = {10.1145/2963102},
       URL = {https://doi.org/10.1145/2963102},
}

@article {Aumueller2015,
    AUTHOR = {Aum\"uller, Martin and Dietzfelbinger, Martin},
     TITLE = {Optimal partitioning for dual-pivot quicksort},
   JOURNAL = {ACM Trans. Algorithms},
  FJOURNAL = {ACM Transactions on Algorithms},
    VOLUME = {12},
      YEAR = {2015},
    NUMBER = {2},
     PAGES = {Art. 18, 36},
      ISSN = {1549-6325,1549-6333},
   MRCLASS = {68P10 (68W40)},
  MRNUMBER = {3465941},
       DOI = {10.1145/2743020},
       URL = {https://doi.org/10.1145/2743020},
}

@Article{Aumueller-Dietzfelbinger-Heuberger-Krenn:2016:dual-pivot-quick,
  author =	 {Aum\"uller, Martin and Dietzfelbinger, Martin and
                  Heuberger, Clemens and Krenn, Daniel and Prodinger,
                  Helmut},
  title =	 {Dual-Pivot Quicksort: {O}ptimality, Analysis and
                  Zeros of Associated Lattice Paths},
  journal =      {Combin. Probab. Comput.},
  volume =       28,
  number =       4,
  pages =        {485--518},
  publisher =    {Cambridge University Press},
  year =	 2019,
  arxiv =	 {1611.00258},
  fwf-p24644 =	 {yes},
  doi =          {10.1017/S096354831800041X},
  dkname =       {quicksort-paths-full},
  dksubmitted =  {2016-10-31},
  dkaccepted =   {2018-07-23},
  dkpublished =  {2019-07-01},
  oastatus =     {Other},
  ISSN =         {0963-5483},
  MRCLASS =      {05A16 (68R05 68W40)},
  MRNUMBER =     {3984045},
  electronic_text = {Cambridge Core Share},
  electronic_link = {cambridge-core-share-dual-pivot-quicksort-optimality-analysis-and-zeros-of-associated-lattice-paths.pdf},
  aauclassification = {SCI Expanded, I},
}

@misc{java23doc,
    key = {Java},
    title = {{Java SE 23} Documentation},
    howpublished = {\url{https://docs.oracle.com/en/java/javase/23/docs/api/java.base/java/util/Arrays.html#sort(byte[])}},
    note = {Accessed: 2025-02-05}
}

@article{Roesler2001,
author = {R\"{o}sler, Uwe and R\"{u}schendorf, Ludger},
title = {The contraction method for recursive algorithms},
year = {2001},
issue_date = {February 2001},
publisher = {Springer-Verlag},
address = {Berlin, Heidelberg},
volume = {29},
number = {1–2},
issn = {0178-4617},
url = {https://doi.org/10.1007/BF02679611},
doi = {10.1007/BF02679611},
journal = {Algorithmica},
month = feb,
pages = {3–33},
numpages = {31},
keywords = {Branching dynamical systems, Branching process, Contraction method, Random trees, Recursive algorithms}
}

@article {fika05,
    AUTHOR = {Fill, James Allen and Kapur, Nevin},
     TITLE = {Transfer theorems and asymptotic distributional results for
              {$m$}-ary search trees},
   JOURNAL = {Random Structures Algorithms},
  FJOURNAL = {Random Structures \& Algorithms},
    VOLUME = {26},
      YEAR = {2005},
    NUMBER = {4},
     PAGES = {359--391},
      ISSN = {1042-9832},
   MRCLASS = {68P10 (60C05 68P05 68R10)},
  MRNUMBER = {2139868},
MRREVIEWER = {Ralph Neininger},
       DOI = {10.1002/rsa.20039},
       URL = {https://doi-org/10.1002/rsa.20039},
}

@article {fija00ecp,
    AUTHOR = {Fill, James Allen and Janson, Svante},
     TITLE = {A characterization of the set of fixed points of the
              {Q}uicksort transformation},
   JOURNAL = {Electron. Comm. Probab.},
  FJOURNAL = {Electronic Communications in Probability},
    VOLUME = {5},
      YEAR = {2000},
     PAGES = {77--84},
      ISSN = {1083-589X},
   MRCLASS = {68P10 (60E05 60E10 60F05)},
  MRNUMBER = {1781841},
       DOI = {10.1214/ECP.v5-1021},
       URL = {https://doi-org/10.1214/ECP.v5-1021},
}

@article {neru99,
    AUTHOR = {Neininger, Ralph and R\"{u}schendorf, Ludger},
     TITLE = {On the internal path length of {$d$}-dimensional quad trees},
   JOURNAL = {Random Structures Algorithms},
  FJOURNAL = {Random Structures \& Algorithms},
    VOLUME = {15},
      YEAR = {1999},
    NUMBER = {1},
     PAGES = {25--41},
      ISSN = {1042-9832},
   MRCLASS = {60F05 (60C05 68Q25 68R10)},
  MRNUMBER = {1698407},
MRREVIEWER = {Uwe R\"{o}sler},
       DOI = {10.1002/(SICI)1098-2418(199908)15:1<25::AID-RSA2>3.0.CO;2-R},
       URL =
              {https://doi-org/10.1002/(SICI)1098-2418(199908)15:1<25::AID-RSA2>3.0.CO;2-R},
}

@book {vi09,
    AUTHOR = {Villani, C\'{e}dric},
     TITLE = {Optimal transport},
    SERIES = {Grundlehren der mathematischen Wissenschaften [Fundamental
              Principles of Mathematical Sciences]},
    VOLUME = {338},
      NOTE = {Old and new},
 PUBLISHER = {Springer-Verlag, Berlin},
      YEAR = {2009},
     PAGES = {xxii+973},
      ISBN = {978-3-540-71049-3},
   MRCLASS = {49-02 (28A75 37J50 49Q20 53C23 58E30)},
  MRNUMBER = {2459454},
MRREVIEWER = {Dario Cordero-Erausquin},
       DOI = {10.1007/978-3-540-71050-9},
       URL = {https://doi-org/10.1007/978-3-540-71050-9},
}

@article {mun11,
    AUTHOR = {Munsonius, G\"{o}tz Olaf},
     TITLE = {On the asymptotic internal path length and the asymptotic
              {W}iener index of random split trees},
   JOURNAL = {Electron. J. Probab.},
  FJOURNAL = {Electronic Journal of Probability},
    VOLUME = {16},
      YEAR = {2011},
     PAGES = {no. 35, 1020--1047},
   MRCLASS = {60F05 (05C05 05C80 60J10 60K05 68P05)},
  MRNUMBER = {2820068},
MRREVIEWER = {Andrzej Ruci\'{n}ski},
       DOI = {10.1214/EJP.v16-889},
       URL = {https://doi-org/10.1214/EJP.v16-889},
}

@book {Chow1988,
    AUTHOR = {Chow, Yuan Shih and Teicher, Henry},
     TITLE = {Probability theory},
    SERIES = {Springer Texts in Statistics},
   EDITION = {Second},
      NOTE = {Independence, interchangeability, martingales},
 PUBLISHER = {Springer-Verlag, New York},
      YEAR = {1988},
     PAGES = {xviii+467},
      ISBN = {0-387-96695-1},
   MRCLASS = {60-02 (60Exx 60Fxx 60G42 60G44)},
  MRNUMBER = {953964},
       DOI = {10.1007/978-1-4684-0504-0},
       URL = {https://doi.org/10.1007/978-1-4684-0504-0},
}

\end{document}